\newtheorem{proposition}{Proposition}
\newtheorem{lemma}[proposition]{Lemma}
\newtheorem{corollary}[proposition]{Corollary}
\newtheorem{theorem}[proposition]{Theorem}
\theoremstyle{definition}
\newtheorem{remark}[proposition]{Remark}
\newtheorem{example}[proposition]{Example}
\newcommand{\R}{\mathbb{R}}
\newcommand{\I}{\mathbb{I}}
\title{Quasi-copulas as linear combinations of copulas}
\author[G. Dolinar]{Gregor Dolinar \orcidlink{0000-0001-9083-5578} }
\address{University of Ljubljana, Faculty of Electrical Engineering, and Institute of Mathematics, Physics and Mechanics, Ljubljana, Slovenia}
\email{gregor.dolinar@fe.uni-lj.si}
\author[B. Kuzma]{Bojan Kuzma \orcidlink{0000-0003-3478-060X} } 
\address{University of Primorska, Faculty of Mathematics, Natural Sciences and Information Technologies, Koper, Slovenia and Institute of Mathematics, Physics and Mechanics, Ljubljana, Slovenia}
\email{bojan.kuzma@upr.si}
\author[N. Stopar]{Nik Stopar \orcidlink{0000-0002-0004-4957} } 
\address{University of Ljubljana, Faculty of Civil and Geodetic Engineering, and Institute of Mathematics, Physics and Mechanics, Ljubljana, Slovenia}
\email{nik.stopar@fgg.uni-lj.si}
\keywords{Quasi-copula; copula; linear combination; affine combination; Minkowski norm}
\subjclass[2020]{62H05, 60E05}
\begin{document}

\begin{abstract}
We prove  that every quasi-copula can be written as a uniformly converging infinite sum of multiples of copulas. 
Furthermore, we characterize those quasi-copulas which can be written as a finite sum of multiples of copulas, i.e., that are a linear combination of two copulas. This generalizes a recent result of 
Fern\'{a}ndez-S\'{a}nchez, Quesada-Molina, and \'{U}beda-Flores who considered linear combinations of discrete copulas. 
\end{abstract}

\maketitle

\section{Introduction}

Since their introduction by Sklar in 1959, copulas have become an important  tool in statistical literature, because they describe all possible dependencies between random variables.
Their widespread use has stimulated investigations of their structural properties and has lead to several important generalizations.

One of the generalizations most closely related to copulas, and perhaps the most fruitful one, are quasi-copulas.
They were introduced in 1993 by Alsina, Nelsen and Scheizer \cite{AlsNelSch93} in order to characterize certain operations on distribution functions. Quasi-copulas and their connections to copulas have since been intensively investigated in order to better understand the set of copulas, see \cite{AriMesDeB17,DurFerTru16,KikLab14,GenQueRodSem99,RodUbe09}.
Quasi-copulas appear naturally in dependence modeling when studying lower and upper bounds of sets of copulas \cite{NelQueRodUbe04,KleKokOmlSamSto22,MarSadShi10}, because point-wise infima and suprema of sets of copulas are always quasi-copulas.
In particular, they are essential in the setting of imprecise probabilities modeled by probability boxes \cite{PelVicMonMir13,MonMirPelVic15,OmlSto20,Sto23}.
They have also become popular in the theory of aggregation functions and in fuzzy set theory \cite{DeBDeMMes12,Kol03,FodDeB08}.  
For an overview of results on quasi-copulas and some recent developments we refer the reader to a survey paper \cite{AriMesDeB20}.

It is well known that the set $\mathcal{C}$ of all bivariate copulas is a convex set. Going beyond convex combinations, the authors in \cite{DarOls95} considered linear combinations and introduced the linear span of all bivariate copulas, which we denote by $\mathcal{S}=\operatorname{span}\mathcal{C}$.
Due to convexity of $\mathcal{C}$, every element of $\mathcal{S}$ can be written as a linear (in fact, affine) combination of two copulas.
The vector space $\mathcal{S}$ can be equipped with the so called \emph{Minkowski norm}, defined by
\begin{equation}\label{eq:Minkowski}
\|A\|_M=\inf\{s+t \mid s,t \ge 0,\ A=sB-tC \text{ for some } B,C \in \mathcal{C}\}
\end{equation}
for all $A \in \mathcal{S}$.
It was shown in \cite{DarOls95} that $\mathcal{S}$ equipped with this norm is a Banach space.
Furthermore, if we additionally equip $\mathcal{S}$ with the product of copulas, also called \emph{Markov product}, introduced in \cite{DarNguOls92},
then $\mathcal{S}$ becomes a Banach algebra.
This Banach algebra can be used to study one-parametric semigroups of copulas.
For further information on this topic we refer the reader to \cite{DurSem16,DarOls95} and the references therein.

It was recently shown in \cite{FerQueUbe21} that every discrete quasi-copula is a linear combination of discrete copulas. This means that in the discrete setting $\mathcal{S}$ contains all discrete quasi-copulas.
In contrast, in the continuous setting $\mathcal{S}$ does not contain all quasi-copulas, since any quasi-copula in $\mathcal{S}$ induces a signed measure (on the Borel $\sigma$-algebra in $\I^2$), but there are quasi-copulas that do not induce a signed measure \cite{FerRodUbe11}.

The main goal of  the present paper is to initiate the study of quasi-copulas as convergent series of scalar multiples of copulas. We extend the aforementioned result of \cite{FerQueUbe21} to general quasi-copulas by showing that any bivariate quasi-copula $Q \colon \I^2 \to \I$ is an infinite linear combination of bivariate copulas, i.e., it can be expressed as an infinite sum of multiples of copulas, where the sum converges uniformly. Moreover, all the partial sums, though not necessarily quasi-copulas, are nevertheless positive multiples of quasi-copulas.
As a consequence we show that the closure of $\mathcal{S}$ in the uniform norm contains all quasi-copulas.

Furthermore, we characterize quasi-copulas that lie in $\mathcal{S}$, i.e., can be expressed as an affine combination of two copulas.
We do this by closely examining how the mass of a (discrete) $2$-increasing function can be dominated with the mass of a multiple of a (discrete) copula.
Our results shed some new light on an open question posed in \cite{AriMesDeB20} on quasi-copulas that induce a signed measure,
because every quasi-copula in $\mathcal{S}$ indeed induces a signed measure.
However, we show by an example that there are quasi-copulas that induce a signed measure but do not lie in $\mathcal{S}$.

The paper is structured as follows. In Section~\ref{sec:pre} we give the necessary definitions and fix some notations.
Section~\ref{sec:discrete} is devoted to discrete quasi-copulas. We investigate decompositions of discrete quasi-copulas as linear combinations of discrete copulas, and the domination of mass of a discrete function with the mass of a discrete copula.
In Section~\ref{sec:general} we present our main results on finite and infinite linear combinations of general copulas along with a counterexample showing that not every quasi-copula which induces a signed measure can be written as a linear combination of two copulas.

\section{Preliminaries}\label{sec:pre}

We will denote by $\I=[0,1]$ the unit interval and by $\I^2=\I \times \I$ the unit square. Recall that a \emph{quasi-copula} is a function $Q \colon \I^2 \to \I$ that satisfies conditions
\begin{enumerate}[$(i)$]
    \item $Q$ is \emph{grounded}, i.e., $Q(x,0)=Q(0,y)=0$ for all $x,y \in \I$,
    \item $Q$ has  \emph{uniform marginals}, i.e., $Q(x,1)=x$ and $Q(1,y)=y$ for all $x,y \in \I$,
    \item $Q$ is \emph{increasing} in each of its arguments,
    \item $Q$ is \emph{$1$-Lipschitz}, i.e.,
    $$|Q(x_2,y_2)-Q(x_1,y_1)| \leq |x_2-x_1|+|y_2-y_1|$$
    for all $(x_1,y_1), (x_2,y_2) \in \I^2$.
\end{enumerate}
A \emph{copula} is a function $Q \colon \I^2 \to \I$ that satisfies conditions $(i)$, $(ii)$ and
\begin{enumerate}[$(i)$]\setcounter{enumi}{4}
    \item $Q$ is \emph{$2$-increasing}, i.e.,    $$Q(x_2,y_2)+Q(x_1,y_1)-Q(x_2,y_1)-Q(x_1,y_2) \geq 0$$
    for all $(x_1,y_1), (x_2,y_2) \in \I^2$ with $x_1\leq x_2$ and $y_1\leq y_2$.
\end{enumerate}
The conditions $(i)$, $(ii)$ and $(v)$ together imply conditions $(iii)$ and $(iv)$, hence every copula is a quasi-copula. A quasi-copula that is not a copula is usually called a \emph{proper} quasi-copula.

If in the above definitions we replace the domain $\I^2$ of the function $Q$ by a finite mesh 
\begin{equation}\label{eq:mesh}
\begin{aligned}
    \delta_1 \times \delta_2=\{x_1,x_2,\ldots,x_{n+1}\} &\times \{y_1,y_2,\ldots,y_{m+1}\} \quad\hbox{ with} \\[1mm]
 0=x_1<x_2<\ldots<x_{n+1}=1&, \qquad 0=y_1<y_2<\ldots<y_{m+1}=1, 
 \end{aligned}
\end{equation}
and restrict the conditions to the mesh, we obtain the definition of a \emph{discrete quasi-copula} and \emph{discrete copula}, respectively.
Note that this definition of a discrete (quasi) copula is slightly more general than those given in \cite{DurSem16,QueSem05} (cf. \cite{OmlSto20}).

Given a rectangle $R=[x_1,x_2] \times [y_1,y_2]$ and a (discrete) quasi-copula $Q$ whose domain contains the vertices of $R$, the $Q$-volume of $R$ is defined by
$$V_Q(R)=Q(x_2,y_2)-Q(x_1,y_2)-Q(x_2,y_1)+Q(x_1,y_1).$$
Note that the volume of rectangle $R$ with respect to the product copula $\Pi(x,y)=xy$ is just the Lebesgue measure of $R$, which will be denoted by
$$\lambda^2(R)=(x_2-x_1)(y_2-y_1).$$

We will denote by $\mathcal{C}$ the set of all bivariate copulas and by $\mathcal{S}=\operatorname{span}\mathcal{C}$ their linear span.
The set $\mathcal{S}$ can be equipped with the Minkowski norm defined in \eqref{eq:Minkowski}.
By \cite[Lemma~3.2]{DarOls95} the infimum in the definition of Minkowski norm is actually a minimum.
If we let $\mathcal{B}=\operatorname{co} (-\mathcal{C} \cup \mathcal{C})$ denote the convex hull of the set $-\mathcal{C} \cup \mathcal{C}$, then $\mathcal{S}=\cup_{t \ge 0} t\mathcal{B}$ and
$$\|A\|_M=\inf\{t>0 \mid A \in t\mathcal{B}\}$$
for all $A \in \mathcal{S}$.
We can similarly introduce the sets $\mathcal{C}$ and $\mathcal{S}$, and the norm $\|.\|_M$ in the context of discrete functions on a fixed finite mesh.
We will maintain the same notations for clarity, since it should be clear from the context which case we are considering.

\section{Mass domination and discrete quasi-copulas}\label{sec:discrete}

In this section we investigate decompositions of discrete quasi-copulas as linear combinations of discrete copulas. Throughout the section we fix a finite mesh $\delta_1\times\delta_2$ as defined in~\eqref{eq:mesh}.
We denote the minimal rectangles of the mesh $\delta_1\times\delta_2$ by
$$R_{ij}=[x_i,x_{i+1}]\times[y_j,y_{j+1}], \qquad 1 \leq i \leq n,\ 1 \leq j \leq m.$$
Note that a discrete quasi-copula $Q$ defined on $\delta_1\times\delta_2$ is a discrete copula if and only if $V_Q(R_{ij}) \geq 0$ for all $i \in \{1,2,\ldots,n\}$ and $j \in \{1,2,\ldots,m\}$.

In \cite[Corollary~11]{FerQueUbe21} it was shown using topological methods that every discrete quasi-copula (defined on an equidistant mesh) can be written as a linear combination of discrete copulas.
The sum of coefficients is automatically equal to $1$, so the linear combination is actually affine.
Since convex combinations of copulas are copulas, this easily implies that every discrete quasi-copula $Q$ can be written as a linear combination of two discrete copulas (in fact, as a difference of two nonnegative multiples of copulas). 
We give here a direct proof of this fact, which even shows that one of the two copulas may be the product copula $\Pi$.
The idea behind this proof is to dominate the mass of $Q$ with a multiple of the mass of $\Pi$. The concept of mass domination will be crucial in the rest of the paper.

\begin{lemma}[{\cite[Corollary~11]{FerQueUbe21}}]\label{lem:1}
For every discrete
quasi-copula $Q$ defined on a finite mesh $\delta_1\times\delta_2$ there exist discrete copulas $C_1$ and $C_2$ defined on $\delta_1\times\delta_2$ and real numbers $\alpha_1 \geq 1$ and $\alpha_2\leq 0$ with $\alpha_1+\alpha_2=1$ such that $Q(x,y)=\alpha_1 C_1(x,y)+\alpha_2 C_2(x,y)$ for all $(x,y)\in \delta_1\times\delta_2$.
\end{lemma}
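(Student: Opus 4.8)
The plan is to dominate the "mass" of $Q$ by a sufficiently large multiple of the mass of the product copula $\Pi$, and then renormalize. Concretely, I would set $\alpha_1 := \sum_{V_Q(R_{ij})<0} \frac{-V_Q(R_{ij})}{\lambda^2(R_{ij})} + 1$ (or any constant at least this large), and define a candidate function on the mesh by $C_1 := \frac{1}{\alpha_1}\bigl(Q - \alpha_2 \Pi\bigr)$ where $\alpha_2 := 1-\alpha_1 \le 0$. Equivalently, $Q = \alpha_1 C_1 + \alpha_2 \Pi$, so we take $C_2 = \Pi$. It remains to check that $C_1$ is a discrete copula on $\delta_1\times\delta_2$.

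First I would verify the boundary conditions: since $Q$, $\Pi$ are grounded with uniform marginals on the mesh and $\alpha_1 + \alpha_2 = 1$, the function $C_1 = \frac{1}{\alpha_1}(Q-\alpha_2\Pi)$ is automatically grounded and has uniform marginals (these conditions are affine and preserved under affine combinations with coefficient sum $1$). So the only substantive point is $2$-increasingness, i.e.\ $V_{C_1}(R_{ij}) \ge 0$ for every minimal rectangle $R_{ij}$. Here $V_{C_1}(R_{ij}) = \frac{1}{\alpha_1}\bigl(V_Q(R_{ij}) - \alpha_2\,\lambda^2(R_{ij})\bigr) = \frac{1}{\alpha_1}\bigl(V_Q(R_{ij}) + (\alpha_1-1)\lambda^2(R_{ij})\bigr)$. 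Since $\alpha_1 > 0$, this is nonnegative iff $V_Q(R_{ij}) \ge (1-\alpha_1)\lambda^2(R_{ij})$, i.e.\ iff $V_Q(R_{ij}) + (\alpha_1-1)\lambda^2(R_{ij}) \ge 0$. When $V_Q(R_{ij})\ge 0$ this is clear since $\alpha_1 \ge 1$; when $V_Q(R_{ij})<0$, it follows from the choice of $\alpha_1$ because $(\alpha_1-1)\lambda^2(R_{ij}) \ge \frac{-V_Q(R_{ij})}{\lambda^2(R_{ij})}\cdot\lambda^2(R_{ij}) = -V_Q(R_{ij})$.

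The one point that needs a small argument — and the only place where the quasi-copula hypothesis (beyond being a discrete $2$-increasing-free function) is used — is that the sum defining $\alpha_1$ is finite and well-defined, which is immediate since the mesh is finite, and more importantly that each negative volume $V_Q(R_{ij})$ is not "too negative" relative to $\lambda^2(R_{ij})$; but in fact we do not even need a quantitative bound here, since for a fixed finite mesh there are only finitely many rectangles and we simply take $\alpha_1$ large enough. Thus the main (and essentially only) obstacle is bookkeeping: confirming that an affine combination with coefficient sum $1$ preserves groundedness and uniform marginals, and that the explicit choice of $\alpha_1$ makes all rectangle volumes of $C_1$ nonnegative. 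Finally, since $C_1$ and $C_2=\Pi$ are discrete copulas, $\alpha_1\ge 1$, $\alpha_2 = 1-\alpha_1 \le 0$, and $\alpha_1+\alpha_2=1$, the statement follows; note this also realizes $Q$ as a difference of two nonnegative multiples of copulas, $Q = \alpha_1 C_1 - (-\alpha_2)\Pi$.
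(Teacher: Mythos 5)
Your proof is correct. It is built on the same core idea as the paper's --- use the product copula $\Pi$ as one of the two copulas and choose the coefficient large enough that the residual, after renormalization, has nonnegative volume on every minimal rectangle --- but the decomposition is the mirror image of the one in the paper. The paper sets $\alpha_1=\max_{i,j}V_Q(R_{ij})/\lambda^2(R_{ij})$, puts $\Pi$ in the \emph{positively} weighted slot ($C_1=\Pi$), and defines the negatively weighted copula as $C_2=\frac{1}{\alpha_2}(Q-\alpha_1\Pi)$; the key inequality is $V_Q(R_{ij})\le\alpha_1\lambda^2(R_{ij})$, i.e.\ the mass of $Q$ is dominated \emph{from above} by $\alpha_1\Pi$. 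You instead put $\Pi$ in the \emph{negatively} weighted slot and absorb $Q$ plus a large multiple of $\Pi$ into the positively weighted copula $C_1=\frac{1}{\alpha_1}(Q-\alpha_2\Pi)$, with the key inequality being $V_Q(R_{ij})+(\alpha_1-1)\lambda^2(R_{ij})\ge 0$. Both verifications are the same two lines of bookkeeping (affine combinations with coefficient sum $1$ preserve groundedness and uniform marginals; check volumes of minimal rectangles), and the two arguments are equally valid. What the paper's variant buys is that it instantiates the ``mass domination'' viewpoint ($\alpha V_C(R_{ij})\ge V_Q(R_{ij})$ for a copula $C$) that drives the rest of Section~3, and its max-based constant is the smallest one compatible with that particular decomposition, which matters later for the Minkowski-norm computations; your sum-based constant is generally much larger than necessary (a max over the negative volumes would already suffice in your setup), but for the bare existence statement this is irrelevant. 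One small correction to a side remark: finiteness of your defining sum is not the only place the quasi-copula hypothesis enters --- groundedness and uniform marginals of $Q$ are also part of that hypothesis and are exactly what make $C_1$ grounded with uniform marginals; what is true is that monotonicity and the Lipschitz condition are never used, so the lemma holds for any grounded function with uniform marginals on a finite mesh.
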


\begin{proof}
Define
$$\alpha_1:=\max_{\substack{i=1,\ldots,n \\ j=1,\ldots,m}}\frac{V_Q(R_{ij})}{\lambda^2(R_{ij})}$$
and let  $C_1(x_i,y_j)=x_iy_j$ be the product copula restricted to $\delta_1\times \delta_2$.
Note that
\begin{equation}\label{eq:sum}
    \sum_{i=1}^n \sum_{j=1}^m V_Q(R_{ij})=1=\sum_{i=1}^n \sum_{j=1}^m \lambda^2(R_{ij}),
\end{equation}
so that $V_Q(R_{ij})\ge \lambda^2(R_{ij})$ for at least one rectangle $R_{ij}$, and therefore, $\alpha_1\ge 1$.
If $\alpha_1=1$ then $V_Q(R_{ij})\le \lambda^2(R_{ij})$ for all $i \in \{1,2,\ldots,n\}$ and $j \in \{1,2,\ldots,m\}$ and equation~\eqref{eq:sum} implies $V_Q(R_{ij})= \lambda^2(R_{ij})$ for all $i \in \{1,2,\ldots,n\}$ and $j \in \{1,2,\ldots,m\}$.
In this case, $Q=C_1$ is the   product copula restricted to $\delta_1\times \delta_2$ and we take $\alpha_2=0$. 
Otherwise let $\alpha_2=1-\alpha_1<0$
  and define $C_2=\frac{1}{
\alpha_2}(Q-\alpha_1 C_1)$. Let us show that  $C_2$ is a copula. Clearly $C_2$ is grounded and has uniform marginals. To verify that $C_2$ is $2$-increasing it suffices to show that $V_{C_2}(R_{ij})\ge0$ for all $i \in \{1,2,\ldots,n\}$ and $j \in \{1,2,\ldots,m\}$. Indeed, 
$$V_{C_2}(R_{ij})= \tfrac{1}{\alpha_2}\bigl(V_{Q}(R_{ij})-\alpha_1 V_{C_1}(R_{ij})\bigr)=
\tfrac{1}{\alpha_2}\Bigl(V_Q(R_{ij})-\alpha_1 \lambda^2(R_{ij})\Bigr)\ge0$$
because $\alpha_2<0$ and $V_Q(R_{ij})-\alpha_1 \lambda^2(R_{ij})\le0$ by the definition of $\alpha_1$.
\end{proof}
\begin{remark}
    Notice that in the
proof of Lemma~\ref{lem:1}, the copula $C_1$ can be any discrete copula which  satisfies
$V_{C_1}(R_{ij}) > 0$ for all $i,j$. Namely, it suffices  to set $\alpha_1=\max_{\substack{i=1,\ldots,n\\ j=1,\ldots,m}}\frac{V_Q(R_{ij})}{V_{C_1}(R_{ij})}$.
\end{remark}
Next, we investigate more closely how the mass of a grounded $2$-increasing function $A\colon\delta_1\times\delta_2\to\R$ can be dominated by a multiple of the mass of a discrete copula.
For a nonzero grounded $2$-increasing function $A\colon\delta_1\times\delta_2\to\R$ define
\begin{equation}\label{eq:alpha}
   \alpha_A =\max_{\substack{i=1,\ldots,n\\ j=1,\ldots,m}} \left\{
   \frac{V_A([x_{i},x_{i+1}]\times\I)}{x_{i+1}-x_i}\,,\,
    \frac{V_A(\I \times [y_{j},y_{j+1}])}{y_{j+1}-y_{j}}
   \right\}
\end{equation}
and for each $(x,y)\in\delta_1\times\delta_2$ let
\begin{equation}\label{eq:underlineC}
\underline{C}_A(x,y)=\max\left\{\frac{V_A([0,x]\times[0,y])}{\alpha_A}, x+y-1+\frac{V_A([x,1]\times[y,1])}{\alpha_A}\right\}
\end{equation}
and 
\begin{equation}\label{eq:overlineC}
\overline{C}_A(x,y)=\min\left\{x-\frac{V_A([0,x]\times[y,1])}{\alpha_A}, y-\frac{V_A([x,1]\times[0,y])}{\alpha_A}\right\}.
\end{equation}
We remark that the conditions for $A$ imply $\alpha_A \neq 0$. On the other hand, if $A$ is a zero function, then $\alpha_A=0$.

Our first result shows that 
$\underline{C}_A$ and  $\overline{C}_A$ are discrete copulas.

\begin{proposition}\label{prop:2}
For every nonzero grounded $2$-increasing function $A \colon \delta_1 \times \delta_2 \to \R$
the functions $\underline{C}_A$ and  $\overline{C}_A$ are discrete copulas. Moreover,
$$\alpha_A V_{\underline{C}_A}(R_{ij})\ge V_A(R_{ij})  \quad\text{and}\quad \alpha_A V_{\overline{C}_A}(R_{ij})\ge V_A(R_{ij})$$
for all $i \in \{1,2,\ldots,n\}$ and $j \in \{1,2,\ldots,m\}$.
\end{proposition}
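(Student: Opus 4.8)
The plan is to recast $\underline{C}_A$ and $\overline{C}_A$ in a form that separates the ``mass-carrying'' summand $\tfrac1{\alpha_A}A$ from a geometric kernel built from two one-variable functions, and then to reduce all the copula axioms to properties of that kernel. Since $A$ is grounded one has $V_A([0,x]\times[0,y])=A(x,y)$, $V_A([0,x]\times[y,1])=A(x,1)-A(x,y)$, $V_A([x,1]\times[0,y])=A(1,y)-A(x,y)$, and $V_A([x,1]\times[y,1])=A(1,1)-A(x,1)-A(1,y)+A(x,y)$. Set $\alpha=\alpha_A$, which is positive because $A$ is nonzero, write $P=\tfrac1\alpha A$, and for $(x,y)\in\delta_1\times\delta_2$ put $u(x)=x-\tfrac1\alpha A(x,1)$, $v(y)=y-\tfrac1\alpha A(1,y)$ and $c=\tfrac1\alpha A(1,1)-1$. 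Substituting into \eqref{eq:underlineC} and \eqref{eq:overlineC} then gives
\[
\underline{C}_A(x,y)=P(x,y)+\max\{0,\ u(x)+v(y)+c\}\qquad\text{and}\qquad\overline{C}_A(x,y)=P(x,y)+\min\{u(x),v(y)\}.
\]

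I would first record the elementary properties of $u,v,c$ coming from the definition \eqref{eq:alpha} of $\alpha_A$. Since $A(x_{i+1},1)-A(x_i,1)=V_A([x_i,x_{i+1}]\times\I)\le\alpha(x_{i+1}-x_i)$, the function $u$ is non-decreasing on $\delta_1$, and symmetrically $v$ is non-decreasing on $\delta_2$. Moreover $u(0)=v(0)=0$ and $u(1)=v(1)=1-\tfrac1\alpha A(1,1)=-c$, so $c\le0$ and $0\le u(x),v(y)\le -c$ at all mesh points.

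Next I would check that $\underline{C}_A$ and $\overline{C}_A$ are discrete copulas. Groundedness and uniform marginals follow at once from the two displayed identities and the bounds $0\le u,v\le-c$: e.g.\ $\underline{C}_A(x,0)=\max\{0,u(x)+c\}=0$ since $u(x)\le -c$, and $\underline{C}_A(x,1)=P(x,1)+\max\{0,u(x)\}=\tfrac1\alpha A(x,1)+u(x)=x$; the remaining boundary identities are analogous, and $\overline{C}_A$ is handled the same way using $\min\{u(x),0\}=0$ and $\min\{u(x),-c\}=u(x)$. For $2$-increasingness it is enough, since volumes are additive, to show $V_{\underline{C}_A}(R_{ij})\ge0$ and $V_{\overline{C}_A}(R_{ij})\ge0$ for all $i,j$; the ranges are then contained in $\I$ by the usual Fréchet--Hoeffding bounds, so both functions are genuine discrete copulas. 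Writing $\underline{C}_A=P+\Phi\circ(u,v)$ and $\overline{C}_A=P+\Psi\circ(u,v)$ with $\Phi(s,t)=\max\{0,s+t+c\}$ and $\Psi(s,t)=\min\{s,t\}$, the key point is that $\Phi$ and $\Psi$ are $2$-increasing on all of $\R^2$ — for $\Phi$ because $w\mapsto\max\{0,w+c\}$ is convex and non-decreasing, and for $\Psi$ because $\min\{s,t\}=\tfrac12\bigl(s+t-|s-t|\bigr)$ with $w\mapsto|w|$ convex — and that $2$-increasingness survives precomposing each coordinate with a non-decreasing function, as such a reparametrisation only rescales the rectangles over which $V$ is taken. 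Since $P=\tfrac1\alpha A$ is $2$-increasing by hypothesis and a sum of $2$-increasing functions is $2$-increasing, this yields $V_{\underline{C}_A}(R_{ij})\ge0$ and $V_{\overline{C}_A}(R_{ij})\ge0$.

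The mass-domination inequalities then fall out of the same decomposition, since $\alpha\,V_{\underline{C}_A}(R_{ij})=\alpha\,V_P(R_{ij})+\alpha\,V_{\Phi\circ(u,v)}(R_{ij})\ge\alpha\,V_P(R_{ij})=V_A(R_{ij})$ using $V_{\Phi\circ(u,v)}(R_{ij})\ge0$ and $\alpha V_P=V_A$, and likewise for $\overline{C}_A$ with $\Psi$ in place of $\Phi$. I expect the genuine difficulty to be precisely the $2$-increasingness of $\underline{C}_A$ and $\overline{C}_A$: a pointwise maximum or minimum of two $2$-increasing functions is not $2$-increasing in general, and what rescues the argument is the special shape of the kernels — affine in $(u(x),v(y))$ inside the $\max$ resp.\ $\min$ — which lets one invoke convexity plus invariance under monotone reparametrisation. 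If one prefers to avoid this, an alternative is to verify $V_{\underline{C}_A}(R_{ij})\ge0$ and $V_{\overline{C}_A}(R_{ij})\ge0$ directly by a short case analysis according to which argument of the $\max$ resp.\ $\min$ is active at each of the four corners of $R_{ij}$.
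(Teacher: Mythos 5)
Your proof is correct, and it takes a genuinely different route from the paper's. You first observe that, writing $P=\tfrac1{\alpha_A}A$, $u(x)=x-\tfrac1{\alpha_A}A(x,1)$, $v(y)=y-\tfrac1{\alpha_A}A(1,y)$ and $c=\tfrac1{\alpha_A}A(1,1)-1$, the definitions \eqref{eq:underlineC} and \eqref{eq:overlineC} collapse to $\underline{C}_A=P+\Phi\circ(u,v)$ and $\overline{C}_A=P+\Psi\circ(u,v)$ with $\Phi(s,t)=\max\{0,s+t+c\}$ and $\Psi(s,t)=\min\{s,t\}$; I checked the algebra and the auxiliary facts ($u,v$ non-decreasing via the definition of $\alpha_A$, $u(0)=v(0)=0$, $u(1)=v(1)=-c\ge0$, and $\alpha_A>0$ for nonzero $A$) and they all hold. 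The boundary conditions then drop out immediately, $2$-increasingness reduces to supermodularity of $\Phi$ and $\Psi$ on $\R^2$ (convexity of $w\mapsto\max\{0,w+c\}$ and of $w\mapsto|w|$, combined with the standard fact that $f(b)+f(b')\le f(a)+f(d)$ when $a\le b,b'\le d$ and $a+d=b+b'$) together with invariance of $2$-increasingness under monotone reparametrisation of each coordinate, and the mass-domination inequalities are the single line $\alpha_A V_{\underline{C}_A}(R_{ij})=V_A(R_{ij})+\alpha_A V_{\Phi\circ(u,v)}(R_{ij})\ge V_A(R_{ij})$, and likewise for $\overline{C}_A$. The paper instead works directly with the $\min$/$\max$ expressions: it introduces the corner quantities $a_{kt},b_{kt}$, applies a $\min$-of-$\max$ difference identity, and runs a case analysis on the ordering of the resulting $\zeta_k$'s to establish both $V_{\overline{C}_A}(R_{ij})\ge0$ and the sharper pointwise bound $\alpha_A V_{\overline{C}_A}(R_{ij})\ge V_A(R_{ij})$ in one computation (and analogously for $\underline{C}_A$). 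Your decomposition is shorter and more conceptual --- it exhibits $\underline{C}_A$ and $\overline{C}_A$ as $\tfrac1{\alpha_A}A$ plus a rescaled Fr\'echet--Hoeffding lower, respectively upper, bound evaluated at the monotone reparametrisation $(u,v)$, which explains \emph{why} the mass of $\alpha_A\underline{C}_A$ and $\alpha_A\overline{C}_A$ dominates that of $A$ --- while the paper's computation keeps explicit track of the increments $\zeta_k-\zeta_k'$, which it then reuses almost verbatim in the proof of Theorem~\ref{thm:2}. Both arguments are complete; yours buys brevity and structural insight, the paper's buys machinery that is recycled later.
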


\begin{proof}
By the definition of $\alpha_A $ we have 
\begin{equation}\label{eq:V_A}
  V_A(\I\times [y_j,y_{j+1}])\le \alpha_A({y_{j+1}-y_j}).  
\end{equation}
Summing up these  inequalities over all $j\in\{1,\dots,k-1\}$, and dividing by $\alpha_A$, we obtain 
 $$\frac{V_A(\I\times [0,y_k])}{\alpha_A}\le y_k.$$
Therefore, $\overline{C}_A(0,y_k)=\min\{0-\tfrac{0}{\alpha_A}, y_k-\frac{V_A(\I\times [0,y_k])}{\alpha_A}\}=0$ for every $k$. Similarly we see that $\overline{C}_A(x_l,0)=0$ for every $l$, so that $\overline{C}_A$ is grounded. 
Moreover, we also find that $\overline{C}_A(1,y_k)=\min\{\frac{V_A(\I\times[0,y_k])}{\alpha_A},y_k-0\}=y_k$. Similarly we see that $\overline{C}_A(x_l,1)=x_l$, i.e., $\overline{C}_A$ has uniform marginals.   

Summing up the  inequalities in \eqref{eq:V_A} over all $j\in\{k,\dots,m\}$, and dividing by $\alpha_A$, we obtain $$\frac{V_A(\I\times [y_k,1])}{\alpha_A}\le(1-y_k).$$
Therefore, $\underline{C}_A(0,y_k)=\max\{0, y_k-1+\frac{V_A(\I\times [y_k,1])}{\alpha_A}\}=0$ for every $k$.
Similarly we see that $\underline{C}_A(x_l,0)=0$ for every $l$, so that $\underline{C}_A$ is grounded.
 Likewise, if in \eqref{eq:V_A} we sum up over all indices $j\in\{1,\dots,k\}$ we see that  $\underline{C}_A(1,y_k)=\max\{\frac{V_A(\I\times[0,y_k])}{\alpha_A},y_k\}=y_k$. Similarly we see that $\underline{C}_A(x_l,1)=x_l$, i.e., $\underline{C}_A$ has uniform marginals.
 
It remains to show that $\underline{C}_A$ and $\overline{C}_A$ are $2$-increasing.
Let $R_{ij}=[x_i,x_{i+1}]\times[y_j,y_{j+1}]$ be a rectangle with vertices in the mesh and denote
\begin{align*}
    a_{kt}&:=\alpha_A x_{k}-V_A([0,x_{k}]\times[y_{t},1]),\\
    b_{kt}&:=\alpha_Ay_{t}-V_A([x_{k},1]\times[0,y_{t}]),
\end{align*}
to simplify the notations. 
Using the identity
\begin{equation}\label{eq:min-max}
 \begin{aligned}
 \min\{\alpha,\beta\}&-\min\{\gamma,\delta\}=\min\Bigl\{\alpha-\min\{\gamma,\delta\},\beta-\min\{\gamma,\delta\}\Bigr\}\\
 &=\min\Bigl\{\max\{\alpha-\gamma,\alpha-\delta\}\,,\,\max\{\beta-\gamma,\beta-\delta\}\Bigr\} 
 \end{aligned}
\end{equation}
we then compute
\begin{align}
\alpha_A V_{\overline{C}_A}(R_{ij})&=\alpha_A\Bigl(\bigl(\overline{C}_A(x_{i+1},y_{j+1})-\overline{C}_A(x_{i+1},y_{j})\bigr)-\bigl( \overline{C}_A(x_{i},y_{j+1})-\overline{C}_A(x_{i},y_{j})\bigr)\Bigr)\nonumber\\
&=\Bigl(\min\{a_{(i+1)(j+1)},b_{(i+1)(j+1)}\}-\min\{a_{(i+1)j},b_{(i+1)j}\}\Bigr)\nonumber\\
&\hspace{4cm}\mbox{}-\Bigl(\min\{a_{i(j+1)},b_{i(j+1)}\}-\min\{a_{ij},b_{ij}\}\Bigr)\nonumber\\
&=\min\Bigl\{\max\{\zeta_1,\zeta_2\},\max\{\zeta_3,\zeta_4\}\Bigr\}\label{eq:min(i+1)}\\
&\hspace{5cm}\mbox{}-\min\Bigl\{\max\{\zeta'_1,\zeta'_2\},\max\{\zeta'_3,\zeta'_4\}\Bigr\} \nonumber
\end{align} 
where in  \eqref{eq:min(i+1)} we used \eqref{eq:min-max} and introduced
\begin{align*}
\zeta_1 &:=a_{(i+1)(j+1)}-a_{(i+1)j}, &\zeta_2 &:=a_{(i+1)(j+1)}-b_{(i+1)j},\\
\zeta_3 &:=b_{(i+1)(j+1)}-a_{(i+1)j}, &\zeta_4 &:=b_{(i+1)(j+1)}-b_{(i+1)j},
\end{align*}
and where  $\zeta'_k$ denote the same quantities as $\zeta_k$ except with $i+1$ replaced by $i$, that is,
\begin{align*}
&&\zeta'_1 &:=a_{i(j+1)}-a_{ij}, &\zeta'_2 &:=a_{i(j+1)}-b_{ij},&&\\
&&\zeta'_3 &:=b_{i(j+1)}-a_{ij}, &\zeta'_4 &:=b_{i(j+1)}-b_{ij}.&&
\end{align*}
It follows that
\begin{align*}
    \zeta_1&=V_A([0,x_{i+1}]\times[y_j,1])-V_A([0,x_{i+1}]\times[y_{j+1},1])=V_A([0,x_{i+1}]\times[y_j,y_{j+1}]),\\
    \zeta_4&=\alpha_A(y_{j+1}-y_j)-\bigl(V_A([x_{i+1},1]\times[0,y_{j+1}])-V_A([x_{i+1},1]\times[0,y_{j}])\bigr)\\
    &=\alpha_A(y_{j+1}-y_j)-V_A([x_{i+1},1]\times[y_j,y_{j+1}]).
\end{align*}
Thus, by definition of $\alpha_A$,
$$\zeta_4-\zeta_1=\alpha_A(y_{j+1}-y_j)-V_A(\I\times[y_j,y_{j+1}])\ge0,$$
or equivalently, $\zeta_4\ge\zeta_1$.
Furthermore,
\begin{align*}
    \zeta_2&=\alpha_A x_{i+1}-V_A([0,x_{i+1}]\times[y_{j+1},1])-\alpha_A y_j+V_A([x_{i+1},1]\times[0,y_j])\\
    &=\alpha_A(x_{i+1}-y_j)+V_A([x_{i+1},1]\times[0,y_j])-V_A([0,x_{i+1}]\times[y_{j+1},1]).
\end{align*}
Clearly, $\zeta_1+\zeta_4=\zeta_2+\zeta_3$, that is, the arithmetic mean of $\zeta_1$ and $\zeta_4$ is the same as the arithmetic mean of $\zeta_2$ and $\zeta_3$.
Using also $\zeta_1\le \zeta_4$ one can check that there are only four possibilities:
$$\zeta_1\le \zeta_2\le\zeta_3\le\zeta_4,\quad \zeta_1\le \zeta_3\le\zeta_2\le\zeta_4,\quad \zeta_2\le \zeta_1\le\zeta_4\le\zeta_3 ,\quad \hbox{and}\quad\zeta_3\le \zeta_1\le\zeta_4\le\zeta_2.$$ In each case, one easily sees that
$$\min\Bigl\{\max\{\zeta_1,\zeta_2\},\max\{\zeta_3,\zeta_4\}\Bigr\}=\min\Bigl\{\max\{\zeta_1,\zeta_2\},\zeta_4\Bigr\}.$$
In the same way we see that a similar equality holds for $\zeta'_1,\ldots,\zeta'_4$.
Therefore, we obtain
\begin{equation}\label{eq:min-max-a1a2a3}
    \alpha_A V_{\overline{C}_A}(R_{ij})=\min\Bigl\{\max\{\zeta_1,\zeta_2\},\zeta_4\Bigr\}-\min\Bigl\{\max\{\zeta_1',\zeta_2'\},\zeta_4'\Bigr\}.
\end{equation}
Note that
\begin{align*}
    \zeta_1-\zeta_1'&=V_A([0,x_{i+1}]\times[y_j,y_{j+1}])-V_A([0,x_{i}]\times[y_j,y_{j+1}])=V_A(R_{ij}),\\
   \zeta_4-\zeta_4'&=\alpha_A(y_{j+1}-y_j)-V_A([x_{i+1},1]\times[y_j,y_{j+1}])\\
   &\hspace{1cm}\mbox{}-\alpha_A(y_{j+1}-y_j)+V_A([x_{i},1]\times[y_j,y_{j+1}])=V_A(R_{ij}),  \\
   \zeta_2-\zeta_2'&=\Bigl(\alpha_A(x_{i+1}-y_j)+V_A([x_{i+1},1]\times[0,y_j])-V_A([0,x_{i+1}]\times[y_{j+1},1])\Bigr)\\
   &\hspace{1cm}\mbox{}-\Bigl(\alpha_A(x_{i}-y_j)+V_A([x_{i},1]\times[0,y_j])-V_A([0,x_{i}]\times[y_{j+1},1]) \Bigr)\\
   &=\alpha_A(x_{i+1}-x_i)-V_A([x_i,x_{i+1}]\times[0,y_j])-V_A([x_i,x_{i+1}]\times[y_{j+1},1])\\
   &=\alpha_A(x_{i+1}-x_i)- V_A([x_i,x_{i+1}]\times\I)+V_A(R_{ij})=V_A(R_{ij})+h,
\end{align*}
where $h=\alpha_A(x_{i+1}-x_i)- V_A([x_i,x_{i+1}]\times\I) \geq 0$ by the definition of $\alpha_A$.
Using these three equalities in \eqref{eq:min-max-a1a2a3} we get
\begin{align*}
 \alpha_A V_{\overline{C}_A}(R_{ij})&=\min\Bigl\{\max\{V_A(R_{ij})+\zeta_1'\,,\,V_A(R_{ij})+h+\zeta_2'\}\,,\,V_A(R_{ij})+\zeta_4'\Bigr\}\\
 &\hspace{1cm}\mbox{}-\min\Bigl\{\max\{\zeta_1',\zeta_2'\},\zeta_4'\Bigr\}\\
 &\ge V_A(R_{ij}),   
\end{align*}
where the last inequality  follows because $h\ge0$ and $\min$ and $\max$ are both increasing functions in any of their arguments.
Since $A$ is $2$-increasing we have that $V_A(R_{ij})\ge0$, so $\overline{C}_A$ is a discrete copula.

Consider now $\underline{C}_A$. Similarly  as above we denote 
\begin{align*}
    \hat{a}_{kt}&:=V_A([0,x_{k}]\times[0,y_{t}]),\\
    \hat{b}_{kt}&:=\alpha_A(x_k+y_t-1)+V_A([x_{k},1]\times[y_{t},1]),
\end{align*}
and 
\begin{align*}
\hat{\zeta}_1&:=\hat{a}_{(i+1)(j+1)}-\hat{a}_{(i+1)j}=V_A([0,x_{i+1}]\times[y_j,y_{j+1}]),\\
\hat{\zeta}_2&:=\hat{a}_{(i+1)(j+1)}-\hat{b}_{(i+1)j},
\\
\hat{\zeta}_3&:=\hat{b}_{(i+1)(j+1)}-\hat{a}_{(i+1)j},
\\
\hat{\zeta}_4&:=\hat{b}_{(i+1)(j+1)}-\hat{b}_{(i+1)j}=\alpha_A(y_{j+1}-y_j)-V_A([x_{i+1},1]\times[y_{j},y_{j+1}]),
\end{align*}
and we let $\hat{\zeta}'_k$ denote the same quantities as $\hat{\zeta}_k$ but with $i+1$ replaced by~$i$.
By using the fact that
$$\max\{\alpha,\beta\}-\max\{\gamma,\delta\}=\max\Bigl\{\min\{\alpha-\gamma,\alpha-\delta\},\min\{\beta-\gamma,\beta-\delta\}\Bigr\} $$ 
one computes 
\begin{align*}
\alpha_A V_{\underline{C}_A}(R_{ij})&=\alpha_A\Bigl(\bigl(\underline{C}_A(x_{i+1},y_{j+1})-\underline{C}_A(x_{i+1},y_{j})\bigl)-\bigl( \underline{C}_A(x_{i},y_{j+1})-\underline{C}_A(x_{i},y_{j})\bigr)\Bigr)\nonumber\\
&=\max\Bigl\{\min\{\hat{\zeta}_1,\hat{\zeta}_2\},\min\{\hat{\zeta}_3,\hat{\zeta}_4\}\Bigr\} \label{eq:min(i+1)}-\max\Bigl\{\min\{\hat{\zeta}'_1,\hat{\zeta}'_2\},\min\{\hat{\zeta}'_3,\hat{\zeta}'_4\}\Bigr\}.
\end{align*}
Note that
\begin{align*}
\hat{\zeta}_4-\hat{\zeta}_1&=\hat{\zeta}'_4-\hat{\zeta}'_1=\alpha_A(y_{j+1}-y_j)-V_A(\I\times[y_j,y_{j+1}])\ge0,\\
\hat{\zeta}_1+\hat{\zeta}_4&=\hat{\zeta}_2+\hat{\zeta}_3,\\ \hat{\zeta}'_1+\hat{\zeta}'_4&=\hat{\zeta}'_2+\hat{\zeta}'_3,\\
\hat{\zeta}_1-\hat{\zeta}'_1&=V_A(R_{ij}),\\
\hat{\zeta}_4-\hat{\zeta}'_4&=V_A(R_{ij}),\\
\hat{\zeta}_3-\hat{\zeta}'_3&=V_A(R_{ij}) +\alpha_A(x_{i+1}-x_i)-V_A([x_i,x_{i+1}]\times\I)=V_A(R_{ij})+\hat{h},
\end{align*}
where $\hat{h}=\alpha_A(x_{i+1}-x_i)-V_A([x_i,x_{i+1}]\times\I)\ge0$. From the first three equations we deduce
$$\max\Bigl\{\min\{\hat{\zeta}_1,\hat{\zeta}_2\}\,,\,\min\{\hat{\zeta}_3,\hat{\zeta}_4\}\Bigr\}=\max\Bigl\{\hat{\zeta}_1,\min\{\hat{\zeta}_3,\hat{\zeta}_4\}\Bigr\}$$
and the same holds for $\hat{\zeta}'_1,\ldots,\hat{\zeta}'_4$.
Therefore,
\begin{align*}
\alpha_A V_{\underline{C}_A}(R_{ij})&=\max\Bigl\{\hat{\zeta}'_1+V_A(R_{ij}),\min\{\hat{\zeta}'_3+V_A(R_{ij})+\hat{h},\hat{\zeta}'_4+V_A(R_{ij}\}\Bigr\}\\
&\hspace{1cm}\mbox{}-\max\Bigl\{\hat{\zeta}'_1,\min\{\hat{\zeta}'_3,\hat{\zeta}'_4\}\Bigr\}\\
&\ge V_A(R_{ij}) \ge 0,
\end{align*}
hence, $\underline{C}_A$ is a discrete copula.
\end{proof}

We illustrate Proposition~\ref{prop:2} with an example.

\begin{example}
Let $\delta_1=\delta_2=\{0,\frac14,\frac24,\frac34,1\}$ and let $A \colon \delta_1 \times \delta_2 \to \R$ be a function given by the matrix of its values
$$A=(A(x_i,y_j))_{ij}=\begin{pmatrix}
    0 & 3 & 8 & 16 & 21 \\
    0 & 1 & 5 & 11 & 15 \\
    0 & 1 & 4 & 10 & 11 \\
    0 & 1 & 1 &  3 &  4 \\
    0 & 0 & 0 &  0 &  0
\end{pmatrix}.$$
Here and in the rest of the paper it should be understood that index $i$ in the matrix runs from left to right and index $j$ runs from bottom to top, so that the bottom left corner of the matrix corresponds to the origin of the coordinate system in the domain of $A$, index $i$~corresponds to coordinate $x$, and index $j$ to coordinate $y$.
The distribution of mass of function $A$ is given by the matrix
$$S=(V_A(R_{ij}))_{ij}=\begin{pmatrix}
    2 & 1 & 2 & 1 \\
    0 & 1 & 0 & 3 \\
    0 & 3 & 4 & 0 \\
    1 & 0 & 2 & 1
\end{pmatrix}.$$
Clearly, $A$ is grounded and $2$-increasing.
Since the mesh is equidistant and equal in both directions, the constant $\alpha_A$ is equal to $4$ times the maximal sum of a row or column of $S$, i.e., $\alpha_A=4 \cdot 8=32$.
An easy but somewhat lengthy calculation using \eqref{eq:underlineC} and \eqref{eq:overlineC} gives functions
$$\underline{C}_A=
\frac{1}{32}\begin{pmatrix}
 0 & 8 & 16 & 24 & 32 \\
 0 & 4 & 11 & 17 & 24 \\
 0 & 1 & 6 & 12 & 16 \\
 0 & 1 & 2 & 4 & 8 \\
 0 & 0 & 0 & 0 & 0 \\
\end{pmatrix}
\quad\text{and}\quad
\overline{C}_A=
\frac{1}{32}\begin{pmatrix}
 0 & 8 & 16 & 24 & 32 \\
 0 & 6 & 13 & 19 & 24 \\
 0 & 6 & 9 & 15 & 16 \\
 0 & 5 & 5 & 7 & 8 \\
 0 & 0 & 0 & 0 & 0 \\
\end{pmatrix},
$$
with distribution of mass
$$
\underline{S}=
\frac{1}{32}\begin{pmatrix}
 4 & 1 & 2 & 1 \\
 3 & 2 & 0 & 3 \\
 0 & 4 & 4 & 0 \\
 1 & 1 & 2 & 4 \\
\end{pmatrix}
\quad\text{and}\quad
\overline{S}=
\frac{1}{32}\begin{pmatrix}
 2 & 1 & 2 & 3 \\
 0 & 4 & 0 & 4 \\
 1 & 3 & 4 & 0 \\
 5 & 0 & 2 & 1 \\
\end{pmatrix}.
$$
It is now easy to verify that  $(\alpha_A \underline{S})_{ij} \ge S_{ij}$ and $(\alpha_A \overline{S})_{ij} \ge S_{ij}$ for every $i,j$, as claimed by Proposition~\ref{prop:2}.

\end{example}

Proposition~\ref{prop:2} shows that the mass of a nonzero grounded $2$-increasing function $A$ can be dominated by the mass of a multiple of a discrete copula, in particular, $\alpha_A\underline{C}_A$ and $\alpha_A \overline{C}_A$.
Next theorem shows that $\alpha_A$ is the optimal constant for such domination and establishes $\underline{C}_A$ and $ \overline{C}_A$ as the lower and upper bound for the corresponding discrete copulas.

\begin{theorem}\label{thm:2}
Let $A\colon \delta_1\times\delta_2\to\R$ be a nonzero grounded $2$-increasing function.
Suppose a real number $\alpha$  and a discrete copula $C\colon \delta_1\times\delta_2\to\I$ satisfy the condition $\alpha V_{C}(R_{ij})\ge V_A(R_{ij})$ for all 
$i \in \{1,2,\ldots,n\}$ and $j \in \{1,2,\ldots,m\}$. Then the following holds.
\begin{enumerate}[$(i)$]
\item $\alpha\ge \alpha_A $.
\item If $\alpha=\alpha_A $, then $\underline{C}_A \le C\le \overline{C}_A$.
\end{enumerate}
\end{theorem}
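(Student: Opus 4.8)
The plan is to exploit the additivity of the volume functional: for any rectangle $R$ with vertices in the mesh one has $V_C(R)=\sum_{R_{ij}\subseteq R}V_C(R_{ij})$, and likewise for $A$. Summing the hypothesis $\alpha V_C(R_{ij})\ge V_A(R_{ij})$ over suitable families of minimal rectangles will therefore yield $\alpha V_C(R)\ge V_A(R)$ for larger rectangles $R$, and then the boundary conditions on the copula $C$ (groundedness and uniform marginals) let us express $V_C(R)$ in closed form. The whole argument is then a matter of choosing the right rectangles.

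For part $(i)$, fix a column index $i$ and sum $\alpha V_C(R_{ij})\ge V_A(R_{ij})$ over $j=1,\dots,m$. Since $C$ has uniform marginals, $V_C([x_i,x_{i+1}]\times\I)=x_{i+1}-x_i$, so $\alpha(x_{i+1}-x_i)\ge V_A([x_i,x_{i+1}]\times\I)$, i.e.\ $\alpha\ge \frac{V_A([x_i,x_{i+1}]\times\I)}{x_{i+1}-x_i}$. Summing instead over a fixed row $j$ gives $\alpha\ge\frac{V_A(\I\times[y_j,y_{j+1}])}{y_{j+1}-y_j}$. Taking the maximum over all $i$ and $j$ yields $\alpha\ge\alpha_A$.

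For part $(ii)$, assume $\alpha=\alpha_A$, which is positive since $A\neq 0$. Fix $(x,y)\in\delta_1\times\delta_2$ and sum the hypothesis over the minimal rectangles contained in each of the four ``corner'' rectangles $[0,x]\times[0,y]$, $[x,1]\times[y,1]$, $[0,x]\times[y,1]$, $[x,1]\times[0,y]$. Using groundedness and uniform marginals of $C$ to evaluate
$$V_C([0,x]\times[0,y])=C(x,y),\qquad V_C([x,1]\times[y,1])=1-x-y+C(x,y),$$
$$V_C([0,x]\times[y,1])=x-C(x,y),\qquad V_C([x,1]\times[0,y])=y-C(x,y),$$
we obtain
$$\alpha_A C(x,y)\ge V_A([0,x]\times[0,y]),\qquad \alpha_A\bigl(1-x-y+C(x,y)\bigr)\ge V_A([x,1]\times[y,1]),$$
$$\alpha_A\bigl(x-C(x,y)\bigr)\ge V_A([0,x]\times[y,1]),\qquad \alpha_A\bigl(y-C(x,y)\bigr)\ge V_A([x,1]\times[0,y]).$$
The first two rearrange to $C(x,y)\ge\frac{V_A([0,x]\times[0,y])}{\alpha_A}$ and $C(x,y)\ge x+y-1+\frac{V_A([x,1]\times[y,1])}{\alpha_A}$, whose maximum is exactly $\underline{C}_A(x,y)$; the last two rearrange to $C(x,y)\le x-\frac{V_A([0,x]\times[y,1])}{\alpha_A}$ and $C(x,y)\le y-\frac{V_A([x,1]\times[0,y])}{\alpha_A}$, whose minimum is exactly $\overline{C}_A(x,y)$. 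Hence $\underline{C}_A\le C\le\overline{C}_A$.

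There is essentially no real obstacle beyond the bookkeeping of which sub-rectangles to sum over; the argument only uses finite additivity of $V_C$ together with the copula boundary conditions. The one point worth flagging is that the definitions of $\underline{C}_A$, $\overline{C}_A$ and the divisions in part $(ii)$ presuppose $\alpha_A>0$, which is guaranteed by the assumption that $A$ is nonzero.
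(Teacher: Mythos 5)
Your proposal is correct and follows essentially the same route as the paper: both parts are proved by summing the hypothesis over minimal rectangles to get $\alpha V_C(R)\ge V_A(R)$ for strips (part $(i)$) and for the four corner rectangles at $(x,y)$ (part $(ii)$), and then using groundedness and uniform marginals of $C$ to rewrite $V_C$ of those rectangles in terms of $C(x,y)$. Your closing remark that $\alpha_A>0$ (needed for the divisions) is also consistent with the paper's observation that a nonzero grounded $2$-increasing $A$ forces $\alpha_A\neq 0$.
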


\begin{proof}
Clearly $\alpha V_C(R) \geq V_A(R)$ for all rectangles with vertices in $\delta_1 \times \delta_2$. Since $C$ is a discrete copula we have $V_C(\I\times[y_j,y_{j+1}])=(y_{j+1}-y_j).$ Therefore,
$$\alpha (y_{j+1}-y_j)= \alpha V_C(\I\times[y_j,y_{j+1}])\ge V_A(\I\times[y_j,y_{j+1}]). $$ Similarly also $\alpha (x_{i+1}-x_i)\ge V_A([x_i,x_{i+1}]\times \I)$. By the definition of $\alpha_A$ this implies
$\alpha\ge \alpha_A$, which proves the first claim.

Assume now that $\alpha=\alpha_A$. To prove the left inequality in item $(ii)$ we estimate
\begin{equation}\label{eq:lower_1}
\begin{aligned}
\alpha C(x_i,y_j)&=\alpha V_C([0,x_i] \times [0,y_j])\\
&\geq V_A([0,x_i] \times [0,y_j]) =\alpha_A \cdot\frac{V_A([0,x_i] \times [0,y_j])}{\alpha_A}
\end{aligned}
\end{equation}
by the fact that $\alpha V_C(R) \geq V_A(R)$ for all rectangles with vertices in $\delta_1\times\delta_2$.
Furthermore, using this fact again, we obtain
\begin{equation}\label{eq:lower_2}
\begin{aligned}
\alpha C(x_i,y_j) &=\alpha V_C([0,x_i] \times [0,y_j]) \\
&=\alpha \big(V_C(\I \times \I)-V_C(\I \times [y_j,1])-V_C([x_i,1] \times \I)+V_C([x_i,1] \times [y_j,1])\big)\\
&=\alpha\big(1-(1-y_j)-(1-x_i)\big)+\alpha V_C([x_i,1] \times [y_j,1])\\
&\geq \alpha(x_i+y_j-1)+V_A([x_i,1] \times [y_j,1])\\
&= \alpha_A(x_i+y_j-1)+V_A([x_i,1] \times [y_j,1])\\
&=\alpha_A \cdot\Bigl(x_i+y_j-1+\frac{V_A([x_i,1] \times [y_j,1])}{\alpha_A}\Bigr).
\end{aligned}
\end{equation}
Since $\alpha=\alpha_A$, then the inequalities~\eqref{eq:lower_1} and \eqref{eq:lower_2} imply
$$\begin{aligned}
\alpha_A C(x_i,y_j)  &\geq \max \bigg\{ \alpha_A \cdot\frac{V_A([0,x_i] \times [0,y_j])}{\alpha_A}, \alpha_A \cdot \Bigl(x_i+y_j-1+\frac{V_A([x_i,1] \times [y_j,1])}{\alpha_A}\Bigr) \bigg\}\\
&=\alpha_A \underline{C}_A(x_i,y_j).
\end{aligned}$$
Since this holds for all $i$ and $j$ we conclude that $C \ge \underline{C}_A$.

For the second inequality in item $(ii)$ we use similar arguments as above and estimate
\begin{equation}\label{eq:upper_1}
    \begin{aligned}
    \alpha C(x_i,y_j) &= \alpha V_C([0,x_i] \times [0,y_j])\\
    &=\alpha \big(V_C([0,x_i] \times \I)-V_C([0,x_i] \times [y_j,1])\big)\\
    &=\alpha x_i-\alpha V_C([0,x_i] \times [y_j,1]) \\
    &\le \alpha x_i-V_A([0,x_i] \times [y_j,1])\\
    &=\alpha_A x_i-V_A([0,x_i] \times [y_j,1])\\
    &=\alpha_A \cdot \Big(x_i-\frac{V_A([0,x_i] \times [y_j,1])}{\alpha_A}\Big)
    \end{aligned}
\end{equation}
and similarly
\begin{equation}\label{eq:upper_2}
    \alpha C(x_i,y_j) \le \alpha_A \cdot \Big(y_j-\frac{V_A([x_i,1] \times [0,y_j])}{\alpha_A}\Big).
\end{equation}
Since $\alpha=\alpha_A$, inequalities \eqref{eq:upper_1} and \eqref{eq:upper_2} imply $$\begin{aligned}
C(x_i,y_j) &\leq \min \bigg\{ x_i-\frac{V_A([0,x_i] \times [y_j,1])}{\alpha_A}, y_j-\frac{V_A([x_i,1] \times [0,y_j])}{\alpha_A}
\bigg\}=\overline{C}_A(x_i,y_j).
\end{aligned}$$
This shows that $\underline{C}_A \le C\le \overline{C}_A$, as claimed.
\end{proof}

A natural question arises whether the assumption $\alpha=\alpha_A$ in item $(ii)$ of Theorem~\ref{thm:2} is necessary. 
The next example shows that it is indeed essential. If a different, non-optimal constant is chosen, the corresponding discrete copulas are no longer bounded by $\underline{C}_A$ and $\overline{C}_A$.

\begin{example}
Let $\delta_1=\delta_2=\{0,\frac12,1\}$ and let $A \colon \delta_1 \times \delta_2 \to \R$ be a grounded function, whose mass distribution is given by the matrix
$$\begin{pmatrix}
    1 & 1 \\
    1 & 1
\end{pmatrix}.$$
Using equations \eqref{eq:alpha}, \eqref{eq:underlineC} and \eqref{eq:overlineC} we get
$\alpha_A=4$ and the distribution of mass of both $\underline{C}_A$ and $\overline{C}_A$ is given by the matrix
$$\begin{pmatrix}
    1/4 & 1/4 \\
    1/4 & 1/4
\end{pmatrix},$$
so that $\underline{C}_A=\overline{C}_A=\Pi$, the discrete product copula.
Let $C_1$ and $C_2$ be discrete copulas defined on $\delta_1 \times \delta_2$ with distribution of mass given respectively by matrices
$$\begin{pmatrix}
    2/6 & 1/6 \\
    1/6 & 2/6
\end{pmatrix}
\quad\text{and}\quad
\begin{pmatrix}
    1/6 & 2/6 \\
    2/6 & 1/6
\end{pmatrix}.
$$
Then $6V_{C_1}(R) \ge V_A(R)$ and $6V_{C_2}(R) \ge V_A(R)$ for all rectangles $R$ of the mesh, but $C_1<\Pi<C_2$.
\end{example}

Lemma~\ref{lem:1} expresses a discrete quasi-copula as an affine combination of two discrete copulas. This decomposition is not unique. For example, if $Q=\alpha_1 C_1+\alpha_2 C_2$ is a proper quasi-copula, then one of the coefficients must be negative, say $\alpha_2<0$, otherwise $Q$ would be a convex combination of copulas, which is again a copula.
In this case we can choose an arbitrary copula $C_3$ along with a real number $\alpha_3>0$ and express $Q$ also as
\begin{equation}\label{eq:alt}
    Q=(\alpha_1 C_1+\alpha_3 C_3)+(\alpha_2 C_2-\alpha_3 C_3)=\alpha_1' C_1'+\alpha_2' C_2',
\end{equation}
where $\alpha_1'=\alpha_1+\alpha_3>0$, $\alpha_2'=\alpha_2-\alpha_3<0$,
$$C_1'=\frac{\alpha_1}{\alpha_1+\alpha_3} C_1+\frac{\alpha_3}{\alpha_1+\alpha_3} C_3$$
and
$$C_2'=\frac{\alpha_2}{\alpha_2-\alpha_3} C_2+\frac{-\alpha_3}{\alpha_2-\alpha_3} C_3.$$
Since $C_1'$ and $C_2'$ are convex combinations of copulas, they are copulas themselves. So equality \eqref{eq:alt} gives an alternative decomposition of $Q$ as an affine combination of two copulas, one in which $|\alpha_1'|+|\alpha_2'|>|\alpha_1|+|\alpha_2|$.
Hence, it is reasonable to look for a decomposition of $Q$ in which the expression $|\alpha_1|+|\alpha_2|$ is as small as possible. In fact, the smallest value is the Minkowski norm of $Q$.
We note that while the minimal value of $|\alpha_1|+|\alpha_2|$ is unique, the minimizing values for $\alpha_1$ and $\alpha_2$ are unique only if $Q$ is a proper discrete quasi-copula.
If $Q$ is a copula, then $Q=tQ+(1-t)Q$ for any $t \in \R$, so the expression $|\alpha_1|+|\alpha_2|=|\alpha_1|+|1-\alpha_1|$ attains its minimal value $1$ for all $\alpha_1=t \in [0,1]$.
Nevertheless, the minimizing values for $\alpha_1$ and $\alpha_2$ are unique if we assume $\alpha_1 \ge 0$ and $\alpha_2 \le 0$.

By Lemma~\ref{lem:1} every discrete quasi-copula is contained in $\mathcal{S}$, the linear span of discrete copulas, so we may calculate its Minkowski norm.
With the above results we can now express the Minkowski norm of a discrete quasi-copula.
For a discrete quasi-copula $Q$ defined on $\delta_1 \times \delta_2$ let $Q_{\mathrm{pos}}$ and $Q_{\mathrm{neg}}$ be grounded $2$-increasing functions defined on $\delta_1 \times \delta_2$ such that
$$V_{Q_{\mathrm{pos}}}(R_{ij})=\max\{V_Q(R_{ij}),0\} \quad\text{and}\quad V_{Q_{\mathrm{neg}}}(R_{ij})=-\min\{V_Q(R_{ij}),0\}$$
for all rectangles $R_{ij}$ of the mesh $\delta_1 \times \delta_2$.
We remark that $Q_{\mathrm{pos}}$ and $Q_{\mathrm{neg}}$ are uniquely determined since they are grounded, and $Q=Q_{\mathrm{pos}} -Q_{\mathrm{neg}}$.

The proof of the next corollary relies on the fact, already mentioned in the Preliminaries, that the Minkowski norm~(see~\eqref{eq:Minkowski}) of a quasi-copula is always achieved (the proof for  general quasi-copulas can be found in \cite[Lemma~3.2]{DarOls95},  but  it works  also in the discrete case).

\begin{corollary}\label{cor:minkowski}
For a discrete quasi-copula $Q$ we have
$\alpha_{Q_{\mathrm{pos}}}-\alpha_{Q_{\mathrm{neg}}}=1$ and
$\|Q\|_M=2\alpha_{Q_{\mathrm{pos}}}-1$, where $\alpha_{Q_{\mathrm{pos}}}$ and $\alpha_{Q_{\mathrm{neg}}}$ are defined by \eqref{eq:alpha}.
\end{corollary}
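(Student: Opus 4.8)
The plan is to establish the two equalities separately, in each case relating $\alpha_{Q_{\mathrm{pos}}}$ and $\alpha_{Q_{\mathrm{neg}}}$ to the mass distribution $(V_Q(R_{ij}))_{ij}$ and then feeding $Q_{\mathrm{neg}}$ into Proposition~\ref{prop:2} and Theorem~\ref{thm:2}.

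For the identity $\alpha_{Q_{\mathrm{pos}}}-\alpha_{Q_{\mathrm{neg}}}=1$ I would use that $Q$ is grounded with uniform marginals, so $V_Q([x_i,x_{i+1}]\times\I)=x_{i+1}-x_i$ and $V_Q(\I\times[y_j,y_{j+1}])=y_{j+1}-y_j$. Summing the identity $V_{Q_{\mathrm{pos}}}(R_{ij})-V_{Q_{\mathrm{neg}}}(R_{ij})=V_Q(R_{ij})$ over a fixed row, resp.\ column, of the mesh shows that on each horizontal or vertical strip the ratio appearing in \eqref{eq:alpha} for $Q_{\mathrm{pos}}$ exceeds the one for $Q_{\mathrm{neg}}$ by exactly $1$; in particular all the $Q_{\mathrm{pos}}$-ratios are $\ge 1$, so $Q_{\mathrm{pos}}\neq 0$, and the maximum in \eqref{eq:alpha} is attained on the same strip for $Q_{\mathrm{pos}}$ and for $Q_{\mathrm{neg}}$. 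This gives $\alpha_{Q_{\mathrm{pos}}}=\alpha_{Q_{\mathrm{neg}}}+1$, and it also holds when $Q_{\mathrm{neg}}=0$, since then $Q$ is a copula with $\alpha_{Q_{\mathrm{pos}}}=1$ and $\alpha_{Q_{\mathrm{neg}}}=0$ by the stated convention.

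For $\|Q\|_M=2\alpha_{Q_{\mathrm{pos}}}-1=1+2\alpha_{Q_{\mathrm{neg}}}$ I would prove both inequalities. For the upper bound, if $Q_{\mathrm{neg}}=0$ then $Q$ is a copula and $\|Q\|_M\le 1$ trivially; otherwise put $\beta=\alpha_{Q_{\mathrm{neg}}}>0$ and use Proposition~\ref{prop:2} to obtain a discrete copula $C$ with $\beta V_C(R_{ij})\ge V_{Q_{\mathrm{neg}}}(R_{ij})$ for all $i,j$. Then $B:=\frac{1}{1+\beta}(Q+\beta C)$ is grounded, has uniform marginals because $Q$ and $C$ do, and $(1+\beta)V_B(R_{ij})=V_Q(R_{ij})+\beta V_C(R_{ij})\ge V_{Q_{\mathrm{pos}}}(R_{ij})-V_{Q_{\mathrm{neg}}}(R_{ij})+V_{Q_{\mathrm{neg}}}(R_{ij})=V_{Q_{\mathrm{pos}}}(R_{ij})\ge 0$, so $B$ is a discrete copula; the decomposition $Q=(1+\beta)B-\beta C$ then yields $\|Q\|_M\le 1+2\beta$. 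For the lower bound, let $Q=sB-tC$ be any decomposition with $s,t\ge0$ and discrete copulas $B,C$; evaluating at $(1,y)$ and using uniform marginals gives $s-t=1$, hence $s+t=1+2t$, while from $tV_C(R_{ij})=sV_B(R_{ij})-V_Q(R_{ij})\ge -V_Q(R_{ij})$ together with $tV_C(R_{ij})\ge 0$ we get $tV_C(R_{ij})\ge\max\{-V_Q(R_{ij}),0\}=V_{Q_{\mathrm{neg}}}(R_{ij})$ for all $i,j$. When $Q_{\mathrm{neg}}\neq 0$, Theorem~\ref{thm:2}$(i)$ forces $t\ge\alpha_{Q_{\mathrm{neg}}}$, so $s+t\ge 1+2\alpha_{Q_{\mathrm{neg}}}$; the case $Q_{\mathrm{neg}}=0$ is immediate. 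Taking the infimum over all decompositions completes the proof, and since that infimum is attained (realized by the decomposition from the upper bound) the optimal pair is $(s,t)=(1+\alpha_{Q_{\mathrm{neg}}},\alpha_{Q_{\mathrm{neg}}})$.

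The strip-wise summation and the copula axioms for $B$ are routine. The point requiring the most care is the lower bound: one must notice that after discarding the nonnegative term $sV_B(R_{ij})$ there is still enough mass left to dominate $V_{Q_{\mathrm{neg}}}(R_{ij})$ rectangle by rectangle — which needs only a brief case distinction on the sign of $V_Q(R_{ij})$ — because this is precisely the mass-domination hypothesis that makes Theorem~\ref{thm:2} applicable and thereby pins $t$ down from below.
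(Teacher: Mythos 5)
Your proof is correct and follows essentially the same route as the paper: the identity $\alpha_{Q_{\mathrm{pos}}}-\alpha_{Q_{\mathrm{neg}}}=1$ by strip-wise summation of $V_{Q_{\mathrm{pos}}}-V_{Q_{\mathrm{neg}}}=V_Q$, the lower bound on $\|Q\|_M$ by extracting a rectangle-wise mass domination from an arbitrary decomposition $Q=sB-tC$ and invoking Theorem~\ref{thm:2}$(i)$, and the upper bound by exhibiting an explicit two-copula decomposition built from Proposition~\ref{prop:2}. The only (harmless) difference is that you apply Proposition~\ref{prop:2} to $Q_{\mathrm{neg}}$ and recover the positive-side copula as the residual $\tfrac{1}{1+\beta}(Q+\beta C)$, whereas the paper dominates $Q_{\mathrm{pos}}$ by $\alpha_{Q_{\mathrm{pos}}}\overline{C}_{Q_{\mathrm{pos}}}$ and takes the negative-side copula as the residual; the two choices are mirror images and both yield the optimal pair of coefficients.
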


\begin{proof}
If $Q$ is a discrete copula, then $Q_{\mathrm{neg}}$ is the zero function, $Q=Q_{\mathrm{pos}}$, $\alpha_{Q_{\mathrm{neg}}}=0$, $\alpha_{Q_{\mathrm{pos}}}=\alpha_Q=1$, and
$\|Q\|_M=1=2\alpha_{Q_{\mathrm{pos}}}-1$. In this case the claim holds. We assume in the rest of the proof that $Q$ is a proper discrete copula, so that both $Q_{\mathrm{pos}}$ and $Q_{\mathrm{neg}}$ are nonzero functions.

The equality $\alpha_{Q_{\mathrm{pos}}}-\alpha_{Q_{\mathrm{neg}}}=1$ follows directly from the definition given in \eqref{eq:alpha} because
$$V_{Q_{\mathrm{pos}}}([x_i,x_{i+1}] \times \I)-V_{Q_{\mathrm{neg}}}([x_i,x_{i+1}] \times \I)=V_Q([x_i,x_{i+1}] \times \I)=x_{i+1}-x_i$$
for all $i$, and a similar equality holds for the set $\I \times [y_j,y_{j+1}]$.

To prove the second claim, let $\|Q\|_M=s+t$ where $Q=sA-tB$ for some discrete copulas $A$ and $B$ (cf. \cite[Lemma~3.2]{DarOls95}, the proof in the discrete case is the same).
Then $V_{Q_{\mathrm{pos}}}(R_{ij})-V_{Q_{\mathrm{neg}}}(R_{ij})=sV_A(R_{ij})-tV_B(R_{ij})$.
For each $i$ and $j$ only one term on the left-hand side is nonzero, hence,
$V_{Q_{\mathrm{pos}}}(R_{ij}) \le sV_A(R_{ij})$ and $V_{Q_{\mathrm{neg}}}(R_{ij}) \le tV_B(R_{ij})$ for all $i$ and $j$.
By Theorem~\ref{thm:2} we have $s \ge \alpha_{Q_{\mathrm{pos}}}$ and $t \ge \alpha_{Q_{\mathrm{neg}}}=\alpha_{Q_{\mathrm{pos}}}-1$, so that $\|Q\|_M=s+t \ge 2\alpha_{Q_{\mathrm{pos}}}-1$. It remains to prove the opposite inequality.

Since $Q$ is a proper discrete quasi-copula, it follows that $\alpha_{Q_{\mathrm{pos}}}-1=\alpha_{Q_{\mathrm{neg}}}>0$.
Note that
$Q=\alpha_{Q_{\mathrm{pos}}}\overline{C}_{Q_{\mathrm{pos}}}-(\alpha_{Q_{\mathrm{pos}}}-1)B$, where
$$B=\frac{1}{\alpha_{Q_{\mathrm{pos}}}-1} (\alpha_{Q_{\mathrm{pos}}}\overline{C}_{Q_{\mathrm{pos}}}-Q).$$
Proposition~\ref{prop:2} implies that $\overline{C}_{Q_{\mathrm{pos}}}$ is a copula, and $\alpha_{Q_{\mathrm{pos}}}V_{\overline{C}_{Q_{\mathrm{pos}}}}(R_{ij}) \ge V_{Q_{\mathrm{pos}}}(R_{ij}) \ge V_Q(R_{ij})$, so that $B$ is a copula as well (it is clearly grounded and has uniform marginals).
This implies
$$\|Q\|_M \le \alpha_{Q_{\mathrm{pos}}}+(\alpha_{Q_{\mathrm{pos}}}-1)=2\alpha_{Q_{\mathrm{pos}}}-1,$$
which finishes the proof.
\end{proof}

While every discrete quasi-copula is a linear combination of discrete copulas, it was noted in \cite{FerQueUbe21} that there are quasi-copulas defined on $\I^2$ that cannot be written as linear combinations of copulas. This is because a quasi-copula that can be written as a linear combination of copulas, can be written as a linear combination of two copulas, and as such, it induces a signed measure on the Borell $\sigma$-algebra in $\I^2$. However, there exist quasi-copulas that do not induce a signed measure on this $\sigma$-algebra, see \cite{FerRodUbe11}.
We investigate linear combinations of copulas defined on $\I^2$ in our next section, see Theorem~\ref{thm:absolute-convergence} and Example~\ref{exa:final}.

\section{Quasi-copulas as converging sums}\label{sec:general}

In this section we investigate decompositions of copulas
as finite and infinite linear combinations of copulas. By an infinite linear combination we mean an infinite sum of multiples of copulas that converges in the supremum norm.

Our first theorem of this section shows that every quasi-copula is an infinite linear combination of copulas. In fact, there are many such decompositions. We will only be interested in their existence and will leave aside the question of their optimality. Note that the converse is false because even a linear combination of two copulas is not necessarily  a quasi-copula, nor a multiple of a quasi-copula. We rely heavily on the fact that every discrete quasi-copula can be extend to a quasi-copula defined on the entire unit square (see \cite[Proposition~4]{OmlSto20} and also~\cite{DurFerTru16}).
\begin{theorem}\label{thm:series}
For every quasi-copula $Q$ there exist copulas $\{C_j\}_{j=1}^\infty$ and real numbers $\{\gamma_j\}_{j=1}^\infty$ with $\sum_{j=1}^\infty \gamma_j =1$ such that
$$Q(x,y)=\sum_{j=1}^\infty \gamma_j C_j(x,y) \qquad
\text{for all $x,y \in \I$,}$$
the series converges uniformly, and all of its partial sums are positive multiples of quasi-copulas.
\end{theorem}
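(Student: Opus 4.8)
The plan is to build the series by repeatedly discretizing $Q$ on finer and finer meshes and peeling off, at each step, a copula that agrees with $Q$ on the current mesh. Fix an increasing sequence of meshes $\delta^{(k)}=\delta_1^{(k)}\times\delta_2^{(k)}$, say the dyadic meshes with mesh size $2^{-k}$, so that $\bigcup_k \delta^{(k)}$ is dense in $\I^2$. At stage $k$ I would like to produce a quasi-copula $P_k$ on $\I^2$ that coincides with $Q$ on $\delta^{(k)}$, and a copula $D_k$ such that $P_k$ and $P_{k+1}$ differ by a multiple of $D_k$. The key point is that if $P_k$ already agrees with $Q$ on $\delta^{(k)}$, then $P_{k+1}-P_k$ vanishes on $\delta^{(k)}$, so it is a ``small'' function: by the $1$-Lipschitz property of quasi-copulas, $\|P_{k+1}-P_k\|_\infty = O(2^{-k})$, which will force uniform convergence of the telescoping series once we know how to write each increment as a bounded multiple of a copula.

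The mechanism for producing $P_k$ and controlling the increments is exactly the mass-domination machinery of Section~\ref{sec:discrete}. Concretely: let $q_k$ be the restriction of $Q$ to $\delta^{(k)}$, a discrete quasi-copula. By Lemma~\ref{lem:1} (or rather its underlying idea) write $q_k = \alpha_1^{(k)} C_1^{(k)} + \alpha_2^{(k)} C_2^{(k)}$ as an affine combination of discrete copulas on $\delta^{(k)}$; using \cite[Proposition~4]{OmlSto20} extend $C_1^{(k)}, C_2^{(k)}$ to honest copulas on $\I^2$, which gives an extension $P_k := \alpha_1^{(k)} \widetilde C_1^{(k)} + \alpha_2^{(k)} \widetilde C_2^{(k)} \in \mathcal S$ of $q_k$ — but this is a linear combination, not yet a quasi-copula, so for the ``partial sums are positive multiples of quasi-copulas'' clause I would instead directly extend $q_k$ itself to a quasi-copula $P_k$ on $\I^2$ via \cite[Proposition~4]{OmlSto20}. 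Now consider the grounded $2$-increasing function $A_k$ (on the mesh $\delta^{(k+1)}$) determined by $V_{A_k}(R) = \max\{V_{P_{k+1}}(R) - V_{P_k}(R), 0\}$; since $P_{k+1}$ and $P_k$ agree on $\delta^{(k)}$, a counting argument over $\delta^{(k+1)}$ together with Proposition~\ref{prop:2} applied to $A_k$ (and to the negative part) expresses $P_{k+1}-P_k$ on $\delta^{(k+1)}$ as $\beta_k'\underline C' - \beta_k'' \underline C''$ with $\beta_k', \beta_k'' \ge 0$ and $\beta_k' - \beta_k'' = 0$ (marginals cancel), and with $\beta_k' = \alpha_{A_k} = O(2^{-k})$ by the Lipschitz estimate on $\|P_{k+1}-P_k\|_\infty$. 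Rewriting, $\gamma_{2k-1} C_{2k-1} := \beta_k' \underline C'$ and $\gamma_{2k} C_{2k} := -\beta_k'' \underline C''$ contributes a bounded-by-$O(2^{-k})$ increment, so $\sum_j |\gamma_j| < \infty$ and the series $\gamma_1 C_1 + \gamma_2 C_2 + \cdots$ converges uniformly; its value on each point of $\bigcup_k \delta^{(k)}$ telescopes to $Q$, and by density and continuity the sum equals $Q$ everywhere. The normalization $\sum_j \gamma_j = 1$ is automatic: each copula sends $(1,1)$ to $1$, and the sum at $(1,1)$ is $Q(1,1)=1$. Finally, the $N$-th partial sum agrees with some extension quasi-copula $P_k$ at all mesh points up to level $k$ and differs from it by a bounded multiple of a copula on $\I^2$; a small additional argument (taking $N$ to be of the form $2k$, so the partial sum is exactly the extension $P_{k+1}$ evaluated via Lemma~\ref{lem:1}-type decomposition, or directly: the partial sum is $P_1 + \sum_{\ell< k}(P_{\ell+1}-P_\ell)$ restricted suitably) shows each partial sum is a nonnegative multiple of a quasi-copula — here I would lean on the discrete construction so that each partial sum is literally $\alpha_1 C_1 - |\alpha_2| C_2$ with $\alpha_1 - |\alpha_2| = 1$, which is $\alpha_1$ times the quasi-copula $\tfrac{1}{\alpha_1}(\alpha_1 C_1 - |\alpha_2| C_2)$ provided this is a quasi-copula — and the latter is precisely guaranteed by reversing Corollary~\ref{cor:minkowski}/Proposition~\ref{prop:2}.

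The main obstacle I anticipate is the last clause: arranging the bookkeeping so that \emph{every} partial sum — not just a subsequence indexed by the mesh levels — is a positive multiple of a quasi-copula. Within one level $k$ we add two terms $\gamma_{2k-1}C_{2k-1}$ (positive) and $\gamma_{2k}C_{2k}$ (negative); after adding only the first of these the partial sum is $P_k + \beta_k'\underline C'$, which is a linear combination but need not be a multiple of a quasi-copula. The fix is to reorder or regroup so that at each level we add the whole increment $P_{k+1}-P_k$ ``at once'' as a single net operation, i.e.\ treat the level-$k$ increment as one term of the form (nonnegative multiple of copula) $-$ (nonnegative multiple of copula) whose two halves are always kept together; equivalently, index the series so that after $k$ terms the partial sum is exactly $\alpha_1^{(k)}\widetilde C_1^{(k)} - |\alpha_2^{(k)}|\widetilde C_2^{(k)}$, the Lemma~\ref{lem:1} decomposition of the level-$k$ extension, and check that consecutive such decompositions differ by a legitimate single copula-multiple — this requires choosing the copulas $C_1^{(k+1)}, C_2^{(k+1)}$ compatibly with $C_1^{(k)}, C_2^{(k)}$, which is where the bulk of the technical work lies and where Proposition~\ref{prop:2} (giving canonical dominating copulas $\underline C_A, \overline C_A$) does the heavy lifting.
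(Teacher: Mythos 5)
Your overall architecture --- restrict $Q$ to finer and finer meshes, extend each restriction to a quasi-copula on $\I^2$, and telescope --- is the same as the paper's, but there is a fatal gap at the step where you claim that the increment $P_{k+1}-P_k$ can be written as $\beta_k'\underline{C}'-\beta_k''\underline{C}''$ with $\beta_k'=\alpha_{A_k}=O(2^{-k})$ ``by the Lipschitz estimate on $\|P_{k+1}-P_k\|_\infty$''. The sup-norm of the increment controls the volume of any single rectangle, but $\alpha_{A_k}$ is the maximum over columns (and rows) of the \emph{total positive part} of the increment's mass in that column divided by the column's width. A column of width $2^{-(k+1)}$ contains $2^{k+1}$ cells whose signed volumes can oscillate; their positive parts can sum to a quantity of order $1$ even though the increment vanishes on $\delta^{(k)}$ and is uniformly small, so $\alpha_{A_k}$ can be of order $2^{k}$ rather than $2^{-k}$. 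In fact your estimate cannot be correct in general: it would give $\sum_j|\gamma_j|<\infty$, and by Theorem~\ref{thm:absolute-convergence} absolute convergence forces $Q\in\mathcal{S}$, hence $Q$ would induce a signed measure --- contradicting the existence of quasi-copulas that induce no signed measure (see \cite{FerRodUbe11} and Example~\ref{exa:final}). The structural obstruction is already visible in the paper's version of your telescoping: writing $\widehat{Q}_{n+1}-\widehat{Q}_n=\zeta_nD_n+\xi_nE_n$ with $D_n,E_n$ copulas forces $\zeta_n+\xi_n=0$ and $\zeta_n\ge 1$, so the natural coefficients never tend to $0$ and the unparenthesized series diverges at $(1,1)$.

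The paper's way around this is not to shrink the coefficients but to split the $n$-th increment into $nK_n$ identical copies of $\frac{1}{nK_n}\bigl(\zeta_nD_n+\xi_nE_n\bigr)$ with $K_n>|\xi_n|$; the individual terms then tend to $0$ uniformly while the coefficient series remains only conditionally convergent. This device also repairs the difficulty you acknowledge with the last clause: every partial sum is then either exactly a convex combination of the quasi-copulas $\widehat{Q}_p$ and $\widehat{Q}_{p+1}$, or a positive multiple of a convex combination of $\widehat{Q}_p$, $\widehat{Q}_{p+1}$, and the copula $D_p$, hence a positive multiple of a quasi-copula. Without some such mechanism your series neither converges nor has controllable intermediate partial sums, so the gap is essential rather than cosmetic.
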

\begin{proof}
Let $n$ be an integer. Choose a finite mesh $\delta_1\times\delta_2\subseteq\I^2$ with $\delta_1=\{x_1,x_2,\ldots,x_{k+1}\}$ and $\delta_2=\{y_1,y_2,\ldots,y_{m+1}\}$ such that
$$|x_{i+1}-x_i|<\frac{1}{n} \quad\text{and}\quad |y_{j+1}-y_j|<\frac{1}{n}$$
for all $i \in \{1,2,\ldots,k\}$ and $j \in \{1,2,\ldots,m\}$.
Let $Q_n$ be the discrete quasi-copula defined  as the restriction of $Q$ to the mesh $\delta_1\times\delta_2$.  By Lemma~\ref{lem:1} there exist discrete copulas $A_n$ and $B_n$ and real numbers $\alpha_n \geq 1$ and $\beta_n \leq 0$ with $\alpha_n+\beta_n=1$ such that $$Q_n =\alpha_n A_n+\beta_n B_n.$$
We extend discrete copulas $A_n$ and $B_n$ piecewise bilinearly to obtain copulas $\widehat{A}_n$ and $\widehat{B}_n$ defined on $\I^2$. Then, $\widehat{Q}_n:=\alpha_n\widehat{A}_n+\beta_n \widehat{B}_n$ coincides with $Q$ on  the mesh $\delta_1\times\delta_2$ and is a piecewise bilinear extension of $Q_n$ to $\I^2$.  As such, $\widehat{Q}_n$ is a quasi-copula by \cite[Proposition~4]{OmlSto20}.
Take any $i\in \{1,2,\ldots,k\}$, $j \in \{1,2,\ldots,m\}$, and $(x,y)\in R_{ij}$. Since both $Q$ and $\widehat{Q}_n$ are $1$-Lipschitz, we can estimate
\begin{align*}
 |Q(x,y)-\widehat{Q}_n(x,y)|&\le |Q(x,y)-Q(x_i,y_j)| + |Q(x_i,y_j)-\widehat{Q}_n(x_i,y_j)|\\
 &\hspace{1cm}\mbox{}+|\widehat{Q}_n(x_i,y_j)-\widehat{Q}_n(x,y)| \\
 &=|Q(x,y)-Q(x_i,y_j)| + |\widehat{Q}_n(x_i,y_j)-\widehat{Q}_n(x,y)| \\
 &\le (|x-x_i|+|y-y_j|)+(|x_i-x|+|y_j-y|)\le \frac{4}{n}.
\end{align*}
This shows that the sequence of functions $\widehat{Q}_n$
converges uniformly to the quasi-copula $Q$.
Therefore, the series
\begin{equation*}\label{eq:Q}
\widehat{Q}_1 + \sum_{n=1}^\infty(\widehat{Q}_{n+1}-\widehat{Q}_n)
\end{equation*}
converges uniformly to $Q$.
Observe that $\widehat{Q}_1=\alpha_{1} \widehat{A}_{1}+\beta_{1} \widehat{B}_{1}$ and
\begin{equation}\label{eq:Qn+1-Qn}
\begin{aligned}
    \widehat{Q}_{n+1}-\widehat{Q}_n &=
    \bigl(\alpha_{n+1} \widehat{A}_{n+1}+\beta_{n+1} \widehat{B}_{n+1}\bigr)- \bigl(\alpha_{n} \widehat{A}_{n}+\beta_{n} \widehat{B}_{n}\bigr)\\
    &=\bigl(\alpha_{n+1} \widehat{A}_{n+1}-\beta_{n} \widehat{B}_{n}\bigr) + \bigl(\beta_{n+1} \widehat{B}_{n+1}-\alpha_{n} \widehat{A}_{n}\bigr)\\
    &=\zeta_n D_n+\xi_n E_n,
\end{aligned}
\end{equation}
where $\zeta_n:=\alpha_{n+1}-\beta_n\ge  1$,  $\xi_n:=\beta_{n+1}-\alpha_n\le -1$, 
$$D_n:=\frac{\alpha_{n+1}}{\alpha_{n+1}-\beta_n}\widehat{A}_{n+1}+\frac{(-\beta_n)}{\alpha_{n+1}-\beta_n}\widehat{B}_{n}$$ and 
$$E_n:=\frac{\beta_{n+1}}{\beta_{n+1}-\alpha_n}\widehat{B}_{n+1}+\frac{(-\alpha_n)}{\beta_{n+1}-\alpha_n}\widehat{A}_{n}.$$
Since $D_n$ and $E_n$ are convex combinations of copulas they are copulas themselves.
By the above results, we have that
\begin{equation}\label{eq:QQ}
 Q(x,y)=\alpha_{1} \widehat{A}_{1}(x,y)+\beta_{1} \widehat{B}_{1}(x,y)+\sum_{n=1}^\infty \big(\zeta_n D_n(x,y)+\xi_n E_n(x,y)\big) 
\end{equation}
for all $(x,y) \in \I^2$ and the sum converges uniformly. 

However, observe that if the parenthesis in the series \eqref{eq:QQ} are omitted, the resulting series  $$\zeta_1 D_1(x,y)+\xi_1 E_1(x,y)+\zeta_2 D_2(x,y)+\xi_2 E_2(x,y)+\zeta_3D_3(x,y)+\dots ,$$
 does not converge, e.g., at $(x,y)=(1,1)$ we obtain a  series
$\zeta_1+\xi_1+\zeta_2+\xi_2+\zeta_3+\cdots$ which is divergent 
because $\zeta_n\ge 1$. 
To finish the proof
of  Theorem~\ref{thm:series} we will modify the series \eqref{eq:QQ}  in such a way that it will converge even without parenthesis.

For every $n$ we fix an  integer $K_n$, such that $K_n >|\xi_n|$. Note that, by \eqref{eq:QQ},
\begin{equation}\label{eq:modified}
\begin{aligned}
Q(x,y)&= \alpha_{1} \widehat{A}_{1}(x,y)+\beta_{1} \widehat{B}_{1}(x,y)+\sum_{n=1}^
\infty\sum_{i=1}^{nK_n} \frac{1}{n K_n}\bigl(\zeta_n  D_n(x,y) +\xi_n  E_n(x,y)\bigr) 
\end{aligned}
\end{equation}
Let us prove that the expanded series
\begin{equation}\label{eq:final}
\begin{aligned}
&\alpha_{1} \widehat{A}_{1}(x,y)+\beta_{1} \widehat{B}_{1}(x,y)+\\
&\mbox{}+
\underbrace{\frac{\zeta_1}{ K_1} D_1(x,y)+\frac{\xi_1}{ K_1} E_1(x,y) +\dots+\frac{\zeta_1}{ K_1} D_1(x,y)+\frac{\xi_1}{ K_1} E_1(x,y)}_{2K_1\text{ terms}} +\\
&\mbox{}+
\underbrace{\frac{\zeta_2}{ 2K_2} D_2(x,y)+\frac{\xi_2}{2 K_2} E_2(x,y) +\dots+\frac{\zeta_2}{2 K_2} D_2(x,y)+\frac{\xi_2}{2 K_2} E_2(x,y)}_{4K_2\text{ terms}} +\\
&\mbox{}+\underbrace{\frac{\zeta_3}{ 3K_3} D_3(x,y)+\frac{\xi_3}{3 K_3} E_3(x,y) +\dots+\frac{\zeta_3}{3 K_3} D_3(x,y)+\frac{\xi_3}{3 K_3} E_3(x,y)}_{6K_3\text{ terms}} +\\
&\mbox{}+\frac{\zeta_4}{ 4K_4} D_4(x,y)+\dots
\end{aligned}
\end{equation}
converges uniformly to $Q(x,y)$. Choose $\varepsilon>0$. Note that the series \eqref{eq:modified}, which  equals the series \eqref{eq:QQ} when its inner finite sums  are calculated,  does converge uniformly to $Q(x,y)$. So if $p$ is sufficiently  large, then its remainder
\begin{equation}\label{eq:first-esst}
    \bigg|\sum_{n=p+1}^
\infty\sum_{i=1}^{nK_n} \frac{1}{n K_n}\bigl(\zeta_n  D_n(x,y) +\xi_n  E_n(x,y)\bigr)\bigg|<\varepsilon \quad\text{for all $x,y \in \I$}.
\end{equation}
 In addition, being a term of a convergent series \eqref{eq:QQ} we also have 
 $$|\zeta_p  D_p(x,y) +\xi_p  E_p(x,y)|<\varepsilon \quad \text{for all $x,y \in \I$}.$$
Hence, for all $k \le pK_p$ and all $x,y \in \I$,
\begin{equation}\label{eq:divided}\bigg|\sum_{i=1}^{k} \frac{1}{p K_p}\bigl(\zeta_p  D_p(x,y) +\xi_p  E_p(x,y)\bigr)\bigg|=\frac{k}{pK_p}|\zeta_p  D_p(x,y) +\xi_p  E_p(x,y)|<\varepsilon.
\end{equation}
Note that each partial sum of the series \eqref{eq:final} is either of the form
\begin{equation}\label{eq:psum1}
\begin{aligned}
\alpha_{1} \widehat{A}_{1}(x,y)+\beta_{1} \widehat{B}_{1}(x,y)&+\sum_{n=1}^
{p-1}\sum_{i=1}^{nK_n} \frac{1}{n K_n}\bigl(\zeta_n  D_n(x,y) +\xi_n  E_n(x,y)\bigr)\\
&+\sum_{i=1}^{k} \frac{1}{p K_p}\bigl(\zeta_p  D_p(x,y) +\xi_p  E_p(x,y)\bigr),
\end{aligned}\end{equation}
for some $p\ge 1$ and $k \in \{1,2,\ldots,pK_p\}$ or of the form
\begin{equation}\label{eq:psum2}\begin{aligned}
\alpha_{1} \widehat{A}_{1}(x,y)&+\beta_{1} \widehat{B}_{1}(x,y)
+\sum_{n=1}^
{p-1}\sum_{i=1}^{nK_n} \frac{1}{n K_n}\bigl(\zeta_n  D_n(x,y) +\xi_n  E_n(x,y)\bigr)\\
&+\sum_{i=1}^{k-1} \frac{1}{p K_p}\bigl(\zeta_p  D_p(x,y) +\xi_p  E_p(x,y)\bigr)+\frac{1}{p K_p}\zeta_p  D_p(x,y),
\end{aligned}\end{equation}
for some $p\ge 1$ and $k \in \{1,2,\ldots,pK_p\}$.
For $p$ sufficiently large, the distance between $Q$ and partial sum \eqref{eq:psum1} can be bounded from above, using the triangular inequality, \eqref{eq:divided}, and \eqref{eq:first-esst}, by the expression
\begin{align*}
&\bigg|\sum_{i=k+1}^{pK_p}\frac{1}{p K_p}\bigl(\zeta_p  D_p(x,y) +\xi_p  E_p(x,y)\bigr)\bigg|\\
&+\bigg|\sum_{n=p+1}^
\infty\sum_{i=1}^{nK_n} \frac{1}{n K_n}\bigl(\zeta_n  D_n(x,y) +\xi_n  E_n(x,y)\bigr) \bigg|<\varepsilon+\varepsilon.
\end{align*}
Similarly, using also $K_p \ge |\xi_p|$, the distance between $Q$ and partial sum \eqref{eq:psum2} can be bounded from above by 
\begin{align*}
&\bigg|\frac{1}{pK_p}\xi_pE_p(x,y)\bigg|+\bigg|\sum_{i=k+1}^{pK_p}\frac{1}{p K_p}\bigl(\zeta_p  D_p(x,y) +\xi_p  E_p(x,y)\bigr)\bigg|\\
&+\bigg|\sum_{n=p+1}^
\infty\sum_{i=1}^{nK_n} \frac{1}{n K_n}\bigl(\zeta_n  D_n(x,y) +\xi_n  E_n(x,y)\bigr) \bigg|<\frac{1}{p}+\varepsilon+\varepsilon.
\end{align*}
This shows that the partial sums of \eqref{eq:final} uniformly converge to $Q(x,y)$.

It only remains to prove that the partial sums of the series \eqref{eq:final} are positive multiples of quasi-copulas.
To do that one observes, by \eqref{eq:Qn+1-Qn} and \eqref{eq:QQ}, that the partial sums either take the form
\begin{equation}\label{eq:first-case}
\begin{aligned}
\widehat{Q}_p(x,y)&+\sum_{i=1}^{k} \frac{1}{p K_p}\bigl(\zeta_p  D_p(x,y) +\xi_p  E_p(x,y)\bigr)\\
&=\widehat{Q}_p(x,y)+\frac{k}{pK_p} (\widehat{Q}_{p+1}(x,y)-\widehat{Q}_{p}(x,y))\\
&=(1-\tfrac{k}{p K_p})\widehat{Q}_p(x,y)+\tfrac{k}{pK_p}\widehat{Q}_{p+1}(x,y)  
\end{aligned}
\end{equation}
or else they take the form
\begin{equation}\label{eq:second-case}
\begin{aligned}
\widehat{Q}_p(&x,y)+\sum_{i=1}^{k-1} \frac{1}{p K_p}\bigl(\zeta_p  D_p(x,y) +\xi_p  E_p(x,y)\bigr)+\frac{\zeta_p}{p K_p}  D_p(x,y)\\
&=(1-\tfrac{k-1}{p K_p})\widehat{Q}_p(x,y)+\tfrac{k-1}{pK_p}\widehat{Q}_{p+1}(x,y)+\tfrac{\zeta_p}{p K_p}  D_p(x,y)\\
&=(1+\tfrac{\zeta_p}{pK_p})\Bigl(\tfrac{ pK_p-k+1}{pK_p+\zeta_p}\widehat{Q}_p(x,y)+\tfrac{k-1}{pK_p+\zeta_p}\widehat{Q}_{p+1}(x,y)+\tfrac{\zeta_p}{p K_p+\zeta_p}  D_p(x,y)\Bigr)
\end{aligned}
\end{equation}
for some $p\ge 1$ and some $k\in\{1,\dots,p K_p\}$. Recall from the start of the proof that $\widehat{Q}_p$ is a quasi-copula, and so are $D_p$ and $E_p$. Hence,  the partial sum \eqref{eq:first-case}, being  a convex combination of quasi-copulas, is a quasi-copula itself. Similarly, the partial sum~\eqref{eq:second-case} is a $(1+\tfrac{\zeta_p}{pK_p})$ multiple of a  quasi-copulas. The result follows since, as shown at the beginning of the proof,  $\zeta_p\ge 1$.
\end{proof}

Note that every partial sum of the series in Theorem~\ref{thm:series} belongs to $\mathcal{S}$, the linear span of copulas. Since the sum converges uniformly, we immediately obtain the following corollary, which extends \cite[Corollary~11]{FerQueUbe21} to the non-discrete setting.

\begin{corollary}
The closure of $\mathcal{S}$ in the supremum norm contains all quasi-copulas.
\end{corollary}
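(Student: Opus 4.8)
The plan is to read this off directly from Theorem~\ref{thm:series}; there is essentially no new content to produce. First I would fix an arbitrary quasi-copula $Q\colon\I^2\to\I$ and apply Theorem~\ref{thm:series} to obtain copulas $\{C_j\}_{j=1}^\infty$ and real numbers $\{\gamma_j\}_{j=1}^\infty$ with $\sum_{j=1}^\infty\gamma_j=1$ such that $Q(x,y)=\sum_{j=1}^\infty\gamma_j C_j(x,y)$ for all $(x,y)\in\I^2$, the convergence being uniform on $\I^2$.

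Next I would record the immediate observation that for every $N$ the partial sum $S_N:=\sum_{j=1}^N\gamma_j C_j$ is a finite linear combination of copulas, so $S_N\in\mathcal{S}=\operatorname{span}\mathcal{C}$. Since copulas, and hence all elements of $\mathcal{S}$, as well as $Q$ itself, are continuous functions on $\I^2$, the supremum norm $\|f\|_\infty=\sup_{(x,y)\in\I^2}|f(x,y)|$ is a genuine norm on this ambient space, and uniform convergence of the series is precisely convergence of the sequence $(S_N)_N$ to $Q$ in $\|\cdot\|_\infty$. Thus $Q$ is a limit of elements of $\mathcal{S}$ in the supremum norm, i.e., $Q$ lies in the supremum-norm closure of $\mathcal{S}$. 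As $Q$ was an arbitrary quasi-copula, this closure contains every quasi-copula.

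I do not anticipate any genuine obstacle here: the entire substance has already been carried out in Theorem~\ref{thm:series}, and the passage from a uniformly convergent series of multiples of copulas to membership in the supremum-norm closure of $\mathcal{S}$ is a one-line argument. The only point worth stating carefully is that uniform convergence on $\I^2$ coincides with convergence in the supremum norm, so that the conclusion is literally the one asserted by the corollary.
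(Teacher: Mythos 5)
Your argument is correct and is exactly the paper's: the authors note that every partial sum of the series from Theorem~\ref{thm:series} lies in $\mathcal{S}$ and that uniform convergence then immediately gives membership in the supremum-norm closure. Nothing is missing.
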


We now generalize definition \eqref{eq:alpha} from discrete quasi-copulas to general quasi-copulas~$Q$ as follows:
\begin{equation}\label{eq:alpha_general}
    \alpha_Q=\displaystyle\sup_{n \geq 1} \left\{ \max_{i=1,\ldots,2^n}2^n \sum_{j=1}^{2^n} V_Q(R_{ij}^{n})^+ \,,\, \max_{j=1,\ldots,2^n}2^n \sum_{i=1}^{2^n} V_Q(R_{ij}^{n})^+ \right\},
\end{equation}
where $R_{ij}^{n}=[\frac{i-1}{2^n},\frac{i}{2^n}] \times [\frac{j-1}{2^n},\frac{j}{2^n}]$, $1\le i,j\le 2^n$ and where we denote
$x^+=\max\{x,0\}$ and $x^-=-\min\{x,0\}$  for any real number $x$.
In our next theorem we characterize quasi-copulas that lie in $\mathcal{S}$.

\begin{theorem}\label{thm:absolute-convergence}
Let $Q$ be a quasi-copula. The following conditions are equivalent:
\begin{enumerate}[$(i)$]
\item There exist copulas $\{C_j\}_{j=1}^\infty$ and real numbers $\{\gamma_j\}_{j=1}^\infty$ such that
$$Q(x,y)=\sum_{j=1}^\infty \gamma_j C_j(x,y),$$
where the series converges absolutely for all $x,y \in \I$.
\item $Q \in \mathcal{S}$, i.e., there exist copulas $A$ and $B$ and real numbers $\alpha$ and $\beta$ such that
$Q(x,y)=\alpha A(x,y) + \beta B(x,y)$ for all $x,y \in \I$.
\item We have $\alpha_Q < \infty$.
\end{enumerate}
\end{theorem}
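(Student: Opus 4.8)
The plan is to prove the cycle of implications $(ii)\Rightarrow(i)\Rightarrow(iii)\Rightarrow(ii)$, which keeps each step short and uses the machinery already in place.

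For $(ii)\Rightarrow(i)$: if $Q=\alpha A+\beta B$ with $A,B$ copulas, this is already a finite linear combination, hence trivially an absolutely convergent series (pad with zero coefficients, or write $\gamma_1=\alpha$, $C_1=A$, $\gamma_2=\beta$, $C_2=B$, and $\gamma_j=0$ for $j\ge3$). This direction is essentially free.

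For $(i)\Rightarrow(iii)$: suppose $Q(x,y)=\sum_j\gamma_jC_j(x,y)$ converges absolutely at each point. Fix $n$ and a row index $i$. Each copula $C_j$ has $V_{C_j}(R_{ij}^n)\ge0$, and $\sum_{j}V_{C_j}(R_{ij}^n)$ equals the mass $C_j$ puts on the horizontal strip, which sums over $i$ to $1$; more usefully, $\sum_{i=1}^{2^n}V_{C_j}(R_{ij}^n)=V_{C_j}(\I\times[\tfrac{j-1}{2^n},\tfrac{j}{2^n}])=2^{-n}$ for each $j$ and each column — wait, I need the horizontal strip: $\sum_{j'=1}^{2^n}V_{C_k}(R_{ij'}^n)=V_{C_k}([\tfrac{i-1}{2^n},\tfrac{i}{2^n}]\times\I)=2^{-n}$. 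Now $V_Q(R_{ij}^n)=\sum_k\gamma_kV_{C_k}(R_{ij}^n)$, so $V_Q(R_{ij}^n)^+\le\sum_k|\gamma_k|V_{C_k}(R_{ij}^n)$ since each $V_{C_k}(R_{ij}^n)\ge0$. Summing over $j$ in a fixed row $i$ gives
$$\sum_{j=1}^{2^n}V_Q(R_{ij}^n)^+\le\sum_{k}|\gamma_k|\sum_{j=1}^{2^n}V_{C_k}(R_{ij}^n)=\sum_k|\gamma_k|\cdot 2^{-n},$$
so $2^n\sum_{j=1}^{2^n}V_Q(R_{ij}^n)^+\le\sum_k|\gamma_k|$, a bound independent of $n$ and $i$. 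The same works for columns, hence $\alpha_Q\le\sum_k|\gamma_k|<\infty$. A subtlety: absolute convergence is assumed only pointwise, not that $\sum_k|\gamma_k|<\infty$; I need the interchange $V_Q(R_{ij}^n)=\sum_k\gamma_kV_{C_k}(R_{ij}^n)$, which is legitimate because $V_Q(R)$ is a finite $\mathbb{Z}$-linear combination of the four pointwise-convergent series at the corners of $R$, so termwise-summable. To get a uniform bound I should use $\sum_k|\gamma_k|V_{C_k}(R^n_{ij})<\infty$ for the specific rectangle, which follows from absolute convergence at the corners; but then summing over $j$ and interchanging the order of summation (Tonelli for nonnegative terms) is fine, and the inner sum $\sum_j V_{C_k}(R^n_{ij})=2^{-n}$ makes the double sum equal $2^{-n}\sum_k|\gamma_k|$ — which forces $\sum_k|\gamma_k|<\infty$ anyway. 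So in fact absolute pointwise convergence at the four points $(\tfrac{i}{2^n},\tfrac{j}{2^n})$ already yields $\sum_k|\gamma_k|<\infty$; this is the one place needing a little care.

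For $(iii)\Rightarrow(ii)$, the main substance: assume $\alpha_Q<\infty$. I expect to mimic the discrete constructions $\overline C_A$, $\underline C_A$ from Proposition~\ref{prop:2} in the limit. Write $Q=Q_{\mathrm{pos}}-Q_{\mathrm{neg}}$ informally — but these need not be genuine functions in the continuous setting, so instead I work mesh by mesh. For each dyadic mesh of size $2^n$, let $Q_n$ be the restriction of $Q$; decompose $Q_n=\alpha^{(n)}A_n-\beta^{(n)}B_n$ using the optimal constants, where by Corollary~\ref{cor:minkowski} and the monotonicity built into~\eqref{eq:alpha_general} one has $\alpha^{(n)}=\alpha_{(Q_n)_{\mathrm{pos}}}\le\alpha_Q$ and $\beta^{(n)}=\alpha^{(n)}-1\le\alpha_Q-1$, uniformly bounded. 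Extend $A_n,B_n$ piecewise bilinearly to copulas $\widehat A_n,\widehat B_n$ on $\I^2$; as in Theorem~\ref{thm:series}, $\alpha^{(n)}\widehat A_n-\beta^{(n)}\widehat B_n\to Q$ uniformly. Passing to a subsequence, $\alpha^{(n)}\to\alpha$ and $\beta^{(n)}\to\beta$ with $\alpha-\beta=1$, $\alpha\le\alpha_Q$. The copulas $\widehat A_n$ need not converge, but the set $\mathcal C$ of copulas is compact in the uniform topology (bounded, equicontinuous by $1$-Lipschitz, closed), so along a further subsequence $\widehat A_n\to A$ and $\widehat B_n\to B$ uniformly with $A,B\in\mathcal C$, and then $Q=\alpha A-\beta B\in\mathcal S$. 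The hard part will be ensuring the constants $\alpha^{(n)}$ stay bounded — this is exactly what $\alpha_Q<\infty$ buys, via the identity $\alpha^{(n)}=2^n\max_{i,j}\{\sum_j V_Q(R^n_{ij})^+,\dots\}$ which I must check equals $\alpha_{(Q_n)_{\mathrm{pos}}}$ from~\eqref{eq:alpha}, i.e. that the row/column sums of positive masses on the dyadic mesh are monotone nondecreasing in $n$ so the supremum in~\eqref{eq:alpha_general} is attained as a limit and dominates every $\alpha^{(n)}$. That monotonicity — refining a mesh can only increase $\sum_j V_Q(R_{ij})^+$ because splitting a rectangle can split cancelling mass into separately-counted positive pieces — together with the compactness argument, is the crux; everything else is bookkeeping carried over from Theorem~\ref{thm:series}.
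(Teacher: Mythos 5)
Your proposal is correct, and it deviates from the paper's proof in two substantive ways. First, you close the cycle via a direct proof of $(i)\Rightarrow(iii)$ by interchanging summation over the copulas $C_k$ with summation over the mesh rectangles; the paper instead proves $(i)\Rightarrow(ii)$ by collecting positive and negative coefficients into two (infinite) convex combinations, and then $(ii)\Rightarrow(iii)$ via the Jordan decomposition of the signed measure $\mu_Q=\alpha\mu_A+\beta\mu_B$. Your route is more elementary (no measure theory), and the one point you flag as needing care is even easier than you make it: absolute convergence at the single point $(1,1)$ already gives $\sum_k|\gamma_k|=\sum_k|\gamma_k|C_k(1,1)<\infty$, after which the Tonelli interchange and the strip identity $\sum_{j'}V_{C_k}(R^n_{ij'})=2^{-n}$ yield $\alpha_Q\le\sum_k|\gamma_k|$ exactly as you say. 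Second, for $(iii)\Rightarrow(ii)$ you keep the full two-copula decomposition $Q_n=\alpha_n A_n-(\alpha_n-1)B_n$ on each dyadic mesh and extract uniformly convergent subsequences of \emph{both} bilinear extensions $\widehat A_n$ and $\widehat B_n$; the paper only takes the limit $C$ of $\overline{C}_{A_n}$ and then verifies by hand, via a rectangle-approximation argument and a case split on $\alpha=1$ versus $\alpha>1$, that $D=\frac{1}{1-\alpha}(Q-\alpha C)$ is $2$-increasing. Your version buys a shorter argument with no case analysis (if $\alpha=1$ then $\beta=0$ and $Q=A$ automatically), at the cost of a second application of compactness of $\mathcal C$; it still produces $Q=\alpha A-(\alpha-1)B$ with $\alpha\le\alpha_Q$, so the subsequent corollary on the Minkowski norm is unaffected. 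One correction: the ``monotonicity in $n$'' you describe as the crux is not needed at all. The quantity $\alpha^{(n)}=\alpha_{(Q_n)_{\mathrm{pos}}}$ computed from \eqref{eq:alpha} on the dyadic mesh $\delta_n\times\delta_n$ is \emph{identically} the quantity $\alpha_n$ of \eqref{eq:alpha_n}, and $\alpha_Q$ is by definition the supremum of these over all $n$, so $\alpha^{(n)}\le\alpha_Q$ holds term by term with no refinement argument. (The monotonicity claim is true, via $V_Q(R)^+\le V_Q(R_1)^++V_Q(R_2)^+$ for a split $R=R_1\cup R_2$, but it plays no role.) You are also right to use the optimal constants from Theorem~\ref{thm:2}/Corollary~\ref{cor:minkowski} rather than those of Lemma~\ref{lem:1}, whose cell-wise ratios $4^nV_Q(R^n_{ij})$ need not stay bounded as the mesh refines.
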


\begin{proof}
$(i) \Longrightarrow (ii)$: Since the series converges absolutely we may  collect together the terms with $\gamma_j>0$ and the terms with $\gamma_j<0$. Note that, due to $Q(1,1)=1$, at least one of $\gamma_j$ must be  positive.
Therefore, $A(x,y):=\frac{1}{\sum_{j=1}^\infty \gamma_j^+}\sum_{j=1}^\infty \gamma_j^+ C_j(x,y)$ is a well-defined (possibly infinite) convex combination of copulas $C_j(x,y)$ so is itself a copula.
If each $\gamma_j\ge0$, then $Q(x,y)=\alpha A(x,y)$ for $\alpha:=\sum_{j=1}^\infty \gamma_j^+$ and $(ii)$ holds by taking $\beta=0$ and taking any copula for $B$.
Otherwise, $(ii)$ holds if we define a copula $B(x,y):=\frac{1}{\sum_{j=1}^\infty \gamma_j^-}\sum_{j=1}^\infty\gamma_j^- C_j(x,y)$ and $\beta=-\sum_{j=1}^\infty \gamma_j^-$.

$(ii) \Longrightarrow (i)$: Trivial.

$(ii) \Longrightarrow (iii)$: Clearly  $\alpha+\beta=1$ since $Q(1,1) = A(1,1) = B(1,1) = 1$.  We can  assume that  $\alpha\ge 1$ and $\beta\le 0$. Namely, if both $\alpha,\beta\ge0$, then 
$Q$ is a convex combination of two copulas, so already a  copula, hence we can redefine $A=Q$ and take $\alpha=1$ and  $\beta=0$. 

Recall that copulas $A$ and $B$ induce positive measures $\mu_A$ and $\mu_B$ on Borel subsets of $\I^2$, so $Q$ induces a signed measure $\mu_Q=\alpha \mu_A+\beta \mu_B$.
By the Jordan decomposition we can express it as a difference $\mu_Q=\mu^+-\mu^-$ of two positive measures $\mu^+$ and $\mu^-$ with mass concentrated on disjoint  sets $P^+,P^-\subseteq\I^2$. Choose any rectangle $S_i=[\frac{i-1}{2^n},\frac{i}{2^n}]\times\I$. Then
\begin{align*}
\mu^+(S_i) &=
\mu_Q(P^+\cap S_i)\le \alpha \mu_A(P^+\cap S_i)\le \alpha \mu_A(S_i)=\alpha \cdot\tfrac{1}{2^n}.
\end{align*}
It follows that  $\alpha\ge 2^n \mu^+(S_i)$ for every positive integer $n$ and every $i \in \{1,\ldots,2^n\}$. 
Clearly, $$V_Q(R_{ij}^n)^+=\big(\mu^+(R_{ij}^n)-\mu^-(R_{ij}^n)\big)^+\le \mu^+(R_{ij}^n)$$
so $\sum_{j=1}^{2^n} V_Q(R_{ij}^n)^+\le \sum_{j=1}^{2^n} \mu^+(R_{ij}^{n})=\mu^+(S_i) $.  This implies that
$$\alpha\ge \max_{i=1,\ldots,2^n} 2^n\mu^+(S_i)\ge \max_{i=1,\ldots,2^n} 2^n \sum_{j=1}^{2^n} V_Q(R_{ij}^n)^+$$
for each $n$. Similarly, by considering rectangles $T_j=\I\times [\frac{j-1}{2^n},\frac{j}{2^n}]$ we see that 
$$\alpha\ge \max_{j=1,\ldots,2^n} 2^n\mu^+(T_j)\ge \max_{j=1,\ldots,2^n} 2^n \sum_{i=1}^{2^n} V_Q(R_{ij}^n)^+$$
for every $n$, and the result follows.

$(iii) \Longrightarrow (ii)$:
For a positive integer $n$ let $\delta_n=\{0,\frac{1}{2^n},\frac{2}{2^n},\ldots,1\}$. Denote
\begin{equation}\label{eq:alpha_n}
    \alpha_{n}=\max_{\substack{i=1,\ldots,2^n\\ j=1,\ldots,2^n}} \left\{ 2^n \sum_{j=1}^{2^n} V_Q(R_{ij}^{n})^+ \,,\, 2^n \sum_{i=1}^{2^n}V_Q(R_{ij}^{n})^+ \right\}
\end{equation}
and let $A_{n}$ be a grounded discrete function defined on $\delta_n \times \delta_n$, such that $V_{A_{n}}(R_{ij}^{n})=V_Q(R_{ij}^{n})^+$, i.e.,
$$A_{n}(\tfrac{i}{2^n},\tfrac{j}{2^n})=\sum_{k=1}^i \sum_{l=1}^j V_Q(R_{kl}^{n})^+.$$
Clearly, $A_{n}$ is $2$-increasing and nonzero.
Note that
$$\begin{aligned}
\alpha_{n}&=\max_{\substack{i=1,\ldots,2^n\\ j=1,\ldots,2^n}} \left\{ 2^n\sum_{j=1}^{2^n}  V_{A_{n}}(R_{ij}^{n}) \,,\, 2^n\sum_{i=1}^{2^n} V_{A_{n}}(R_{ij}^{n}) \right\}\\
&=\max_{\substack{i=1,\ldots,2^n\\ j=1,\ldots,2^n}} \Bigl\{ 2^n V_{A_{n}}([\tfrac{i-1}{2^n},\tfrac{i}{2^n}] \times \I) \,,\, 2^nV_{A_{n}}(\I \times [\tfrac{j-1}{2^n},\tfrac{j}{2^n}]) \Bigr\}=\alpha_{A_{n}}
\end{aligned}$$
where $\alpha_{A_{n}}$ is defined as in \eqref{eq:alpha} with respect to $A=A_{n}$.
Observe that $V_{A_{n}}([\tfrac{i-1}{2^n},\tfrac{i}{2^n}]\times \I)\ge V_{Q}([\tfrac{i-1}{2^n},\tfrac{i}{2^n}]\times\I)=\frac{1}{2^n}$ where the last equality follows because $Q$ is a quasi-copula. Similarly, $V_{A_{n}}(\I\times [\tfrac{j-1}{2^n},\tfrac{j}{2^n}])\ge V_{Q}(\I\times [\tfrac{j-1}{2^n},\tfrac{j}{2^n}])=\frac{1}{2^n}$. Therefore,
\begin{equation}\label{eq:alpha(mn}
\alpha_{n}\ge 1.
\end{equation}

Define, as in \eqref{eq:overlineC},
$$C_{n}=\overline{C}_{A_{n}}.$$  By Proposition~\ref{prop:2} the function $C_{n}$ is a discrete copula and we have
\begin{equation}\label{eq:selena}
\alpha_{n} V_{C_{n}}(R_{ij}^{n}) \geq V_{A_{n}}(R_{ij}^{n})
\end{equation} for all $i$ and $j$.

We can extend the discrete copula $C_{n}$ to a piecewise bilinear copula $\widehat{C}_{n}\colon \I^2 \to \I$. By the hypothesis in item $(iii)$ the sequence of numbers $(\alpha_{n})_{n=1}^\infty$ is bounded, hence, by passing to a subsequence, we may assume it is convergent with limit $\alpha$.
Furthermore, since the set of all copulas is compact in the supremum norm, we may likewise assume that the sequence of copulas $(\widehat{C}_{n})_{n=1}^\infty$ converges to a copula $C \colon \I^2 \to \I$.
Note that, by \eqref{eq:alpha(mn}, $\alpha=\lim_{n \to \infty}\alpha_{n}\ge1$. We consider two cases

\medskip\noindent {\bf Case $\alpha=1$.} We claim that, in this case, $Q$ is a copula. To see this,  choose an arbitrary rectangle $R\subseteq\I^2$ and let $\varepsilon>0$. Given an integer $n$, let
\begin{equation}\label{eq:R_n'}
R'_n=[\tfrac{i_0}{2^n},\tfrac{i_1}{2^n}] \times [\tfrac{j_0}{2^n},\tfrac{j_1}{2^n}]
\end{equation}
be a maximal rectangle, with vertices on the mesh $\delta_n\times\delta_n$, such that $R'_n \subseteq R$.
Since quasi-copula $Q$ is continuous and $V_Q(R)$ is a sum and difference of the values of $Q$ at the four vertices of $R$, we see that if $n$ is sufficiently large (so that the mesh is sufficiently fine), then  
\begin{equation}\label{eq:abs}
|V_Q(R_n')-V_Q(R)|<\varepsilon.
\end{equation}
Due to $\alpha=\lim_{n \to \infty}\alpha_{n}=1$ we may, if needed, increase $n$ to achieve that $\alpha_{n}<1+\varepsilon$.
By definition of $\alpha_{n}$ we then have
\begin{equation}\label{eq:1eps}
2^n V_{A_{n}}([\tfrac{i-1}{2^n},\tfrac{i}{2^n}]\times\I)<1+\varepsilon.
\end{equation}
Note that we have
\begin{align*}
1=2^n V_{Q}([\tfrac{i-1}{2^n},\tfrac{i}{2^n}]\times\I)&=2^n\sum_{j=1}^{2^n} \left(V_Q(R_{ij}^{n})^+ -V_Q(R_{ij}^{n})^-\right)\\
&=2^n V_{A_{n}}([\tfrac{i-1}{2^n},\tfrac{i}{2^n}]\times\I)- 2^n\sum_{j=1}^{2^n} V_Q(R_{ij}^{n})^-, 
\end{align*}
 hence, by inequality \eqref{eq:1eps}, we get
$$\sum_{j=1}^{2^n} V_Q(R_{ij}^{n})^- =V_{A_{n}}([\tfrac{i-1}{2^n},\tfrac{i}{2^n}]\times\I)-\frac{1}{2^n}<\frac{\varepsilon}{2^n}$$
for every $i$. Summing over all $i$ we obtain
\begin{equation}\label{eq:estimate1}
\sum_{i=1}^{2^n}\sum_{j=1}^{2^n} V_Q(R_{ij}^{n})^- <\varepsilon.
\end{equation}
By \eqref{eq:abs} we have
\begin{align*}
V_Q(R) &>V_Q(R'_n)-\varepsilon=\sum_{i=i_0+1}^{i_1}\sum_{j=j_0+1}^{j_1} (V_Q(R_{ij}^{n})^+ -V_Q(R_{ij}^{n})^-) -\varepsilon\\
&\ge \sum_{i=i_0+1}^{i_1}\sum_{j=j_0+1}^{j_1} (0-V_Q(R_{ij}^{n})^-) -\varepsilon\ge-2\varepsilon,
\end{align*}
where the last inequality follows from \eqref{eq:estimate1}. Since $\varepsilon>0$ was arbitrary, we conclude that $V_Q(R)\ge 0$ and therefore $Q$ is a copula.

\medskip\noindent  {\bf Case $\alpha>1$.}
Define $D=\frac{1}{1-\alpha}(Q-\alpha C)$. 
We claim that  $D$ is a copula. Clearly, it is grounded and has uniform marginals. To see that it is also $2$-increasing we adjust the arguments from the previous case as follows.

Choose an arbitrary rectangle $R\subseteq\I^2$, let $\varepsilon>0$, and let $R'_n \subseteq R$ be as in \eqref{eq:R_n'}. Define $D_n=\frac{1}{1-\alpha_{n}}(Q-\alpha_{n}\widehat{C}_{n})$. Since  $D_n$ uniformly converges to $D$ and $D$ is  continuous we see that 
\begin{equation}\label{eq:boj}
    |V_{D_n}(R_n')-V_D(R)|\le |V_{D_n}(R_n')-V_D(R_n')|+|V_{D}(R_n')-V_D(R)|<\varepsilon
\end{equation}
for every $n$ sufficiently large.

By  inequality \eqref{eq:selena}  and the definition of $A_{n}$  we have
$$\alpha_{n} V_{\widehat{C}_{n}}(R_{n}')=\alpha_{n} V_{C_{n}}(R_{n}') \ge V_{A_{n}}(R_{n}')\ge V_{Q}(R_{n}')$$
for every positive integer $n$.
It follows that
$$(1-\alpha_{n})V_{D_n}(R_n')=V_Q(R_n')-\alpha_{n} V_{\widehat{C}_{n}}(R_{n}')\le 0$$
for every positive integer $n$.
Due to $\lim_{n \to \infty}(1-\alpha_{n})=1-\alpha<0$ it follows that, for each $n$ sufficiently large,
$V_{D_n}(R_n')\ge0$. Combined with \eqref{eq:boj} we see that 
$V_D(R)> -\varepsilon$. Sending $\varepsilon$ to $0$ we obtain $V_D(R)\ge 0$ and therefore $D$ is $2$-increasing, as claimed.
 
We conclude that $Q=\alpha C+(1-\alpha) D$ is a linear combination of copulas $C$ and $D$.
\end{proof}

\begin{remark}\label{rem:main}
By modifying the above arguments one can show that, in Theorem~\ref{thm:absolute-convergence}, items $(i)$ and $(ii)$ are equivalent to 
\begin{itemize}
\item[$(iii')$] If $\bigl(\delta_{n_k}\times\delta_{m_k}\bigr)_{k}=\bigl(\{x_1^{(k)},\dots, x_{n_k+1}^{(k)}\}\times\{y_1^{(k)},\dots,y_{m_k+1}^{(k)}\}\bigr)_{k}$ is any sequence of meshes in $\I^2$ satisfying
$$0=x_1^{(k)}<x_2^{(k)}<\ldots<x_{n_k+1}^{(k)}=1,$$
$$0=y_1^{(k)}<y_2^{(k)}<\ldots<y_{m_k+1}^{(k)}=1,$$
such that
$$\max_{i=1,\ldots,n_k}( x_{i+1}^{(k)}-x_{i}^{(k)})\xrightarrow{k\to\infty}{0}, \quad
\max_{i=1,\ldots,m_k} (y_{j+1}^{(k)}-y_{j}^{(k)})\xrightarrow{k\to\infty}{0},$$ then
$$\sup_{m_k,n_k \geq 1} \left\{ \max_{i=1,\ldots,n_k} \sum_{j=1}^{m_k} \frac{V_Q(R_{ij}^{(k)})^+}{ x_{i+1}^{(k)}-x_{i}^{(k)}} \,,\, \max_{j=1,\ldots,m_k} \sum_{i=1}^{n_k} \frac{V_Q(R_{ij}^{(k)})^+}{ y_{j+1}^{(k)}-y_{j}^{(k)}} \right\}<\infty,$$
where $R_{ij}^{(k)}=[x_{i}^{(k)},x_{i+1}^{(k)}] \times [y_{j}^{(k)},y_{j+1}^{(k)}]$.
\end{itemize}
\end{remark}

The next corollary is an extension of Corollary~\ref{cor:minkowski} to general quasi-copulas that lie in $\mathcal{S}$. It expresses the Minkowski norm of a quasi-copula $Q \in \mathcal{S}$ with the coefficient $\alpha_Q$ defined in equation \eqref{eq:alpha_general}. It can be seen as a supplement to \cite[Theorem~5.2]{DarOls95}, where the Minkowski norm was expressed with the help of derivatives.
Note, however, that there is a crucial difference between the formula expressed with $\alpha_Q$ and the formula given in \cite[Theorem~5.2]{DarOls95}. Namely, $\alpha_Q$ can be calculated for any quasi-copula, even those that are not contained in $\mathcal{S}$, while the formula in \cite[Theorem~5.2]{DarOls95} assumes apriori that $Q$ is in $\mathcal{S}$.

\begin{corollary}
    For every quasi-copula $Q \in \mathcal{S}$ we have $\|Q\|_M=2\alpha_Q-1$.
\end{corollary}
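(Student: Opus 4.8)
The plan is to follow the template of the discrete Corollary~\ref{cor:minkowski}, extracting the two inequalities $\|Q\|_M\le 2\alpha_Q-1$ and $\|Q\|_M\ge 2\alpha_Q-1$ from the two halves of the proof of Theorem~\ref{thm:absolute-convergence}. I would first record two easy preliminary facts: that $\alpha_Q\ge 1$ always, since each inner sum in \eqref{eq:alpha_general} satisfies $\sum_{j=1}^{2^n} V_Q(R_{ij}^n)^+\ge \sum_{j=1}^{2^n} V_Q(R_{ij}^n)=V_Q([\tfrac{i-1}{2^n},\tfrac{i}{2^n}]\times\I)=\tfrac1{2^n}$ (as $x\le x^+$ and $Q$ has uniform marginals); and that if $Q$ is a copula then $V_Q(R_{ij}^n)\ge 0$ for all $i,j,n$, so every inner sum equals $\tfrac1{2^n}$, giving $\alpha_Q=1$, while $\|Q\|_M=1$ for every copula, so the formula holds in this case and we may assume $Q$ is a proper quasi-copula.

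For the lower bound I would take an arbitrary representation $Q=sA-tB$ with $s,t\ge 0$ and $A,B\in\mathcal{C}$. Evaluating at $(1,1)$ forces $s-t=1$, so $s\ge 1$ and $t=s-1$. The copulas induce positive Borel measures and $\mu_Q:=s\mu_A-t\mu_B$ is a signed measure; applying its Jordan decomposition $\mu_Q=\mu^+-\mu^-$ exactly as in the proof of $(ii)\Rightarrow(iii)$ of Theorem~\ref{thm:absolute-convergence}, one gets $\mu^+(S_i)\le s\mu_A(S_i)=\tfrac{s}{2^n}$ for every vertical strip $S_i=[\tfrac{i-1}{2^n},\tfrac{i}{2^n}]\times\I$, hence $2^n\sum_{j=1}^{2^n}V_Q(R_{ij}^n)^+\le 2^n\mu^+(S_i)\le s$, and likewise for horizontal strips. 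Taking the supremum over $n$ and the maxima over $i$ and $j$ yields $s\ge\alpha_Q$, so $s+t=2s-1\ge 2\alpha_Q-1$; since $\|Q\|_M$ is the infimum of such $s+t$, we conclude $\|Q\|_M\ge 2\alpha_Q-1$.

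For the upper bound I would reuse the construction in the proof of $(iii)\Rightarrow(ii)$ of Theorem~\ref{thm:absolute-convergence}: working with the dyadic meshes $\delta_n$, the numbers $\alpha_n$ of \eqref{eq:alpha_n}, and the bilinear copulas $\widehat{C}_n$, one passes to a subsequence along which $\alpha_n\to\alpha$ and $\widehat{C}_n\to C$ for a copula $C$, and obtains $Q=\alpha C+(1-\alpha)D$ for a copula $D$, with $\alpha\ge 1$. Since $\alpha$ is a subsequential limit of $(\alpha_n)$ and $\alpha_n\le\sup_m\alpha_m=\alpha_Q$ for every $n$, we have $\alpha\le\alpha_Q$. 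If $\alpha=1$ then that proof shows $Q$ is a copula, a case already settled; if $\alpha>1$ then $Q=\alpha C-(\alpha-1)D$ is an admissible representation, so $\|Q\|_M\le\alpha+(\alpha-1)=2\alpha-1\le 2\alpha_Q-1$. Combining this with the lower bound gives $\|Q\|_M=2\alpha_Q-1$ (and, a posteriori, $\alpha=\alpha_Q$).

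I expect the only genuinely delicate point to be the identification of the constant $\alpha$ coming out of the compactness argument with $\alpha_Q$. The inequality $\alpha\le\alpha_Q$ is free, and the reverse is forced once the lower bound is in hand, so the squeeze closes the argument; alternatively one can show directly that $(\alpha_n)$ is non-decreasing — refining a dyadic strip into two strips of half width together with $(a+b+c+d)^+\le a^++b^++c^++d^+$ gives $2^n\sum_{j}V_Q(R_{ij}^n)^+\le\alpha_{n+1}$ — so that $\alpha_n\uparrow\alpha_Q$ outright. Beyond this, all the analytic work is already contained in Theorem~\ref{thm:absolute-convergence}, so I anticipate no further obstacle.
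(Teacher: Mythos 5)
Your proof is correct and takes essentially the same approach as the paper: the lower bound $\|Q\|_M\ge 2\alpha_Q-1$ is obtained by dominating the positive part of the mass of $Q$ in each dyadic strip by $s$ times the mass of a copula (the paper does this directly via the pointwise inequality $V_Q(R_{ij}^n)^+\le sV_A(R_{ij}^n)$ rather than through the Jordan decomposition of the induced signed measure, but the two arguments are interchangeable), and the upper bound reuses the decomposition $Q=\alpha C-(\alpha-1)D$ from the proof of $(iii)\Rightarrow(ii)$ of Theorem~\ref{thm:absolute-convergence} together with $\alpha\le\alpha_Q$. Your additional observation that $(\alpha_n)$ is non-decreasing is correct but, as you note, not needed once the squeeze closes.
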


\begin{proof}
By \cite[Lemma~3.2]{DarOls95} there exist quasi-copulas $A$ and $B$, and real numbers $s,t \ge 0$ such that $Q=sA-tB$ and $\|Q\|_M=s+t$. Evaluating at point $(1,1)$ we get $s-t=1$ and
$$\|Q\|_M=2s-1.$$
Then $V_{Q}(R_{ij}^n)=sV_A(R_{ij}^n)-tV_B(R_{ij}^n)$ and so
$$V_{Q}(R_{ij}^n)^+ \le sV_A(R_{ij}^n)$$
for all $i$, $j$, and $n$.
Summing over all $i$ we obtain
$$\sum_{i=1}^{2^n} V_{Q}(R_{ij}^n)^+ \le s\sum_{i=1}^{2^n} V_A(R_{ij}^n).$$
Since $A$ is a copula we get $\sum_{i=1}^{2^n} V_A(R_{ij}^n) = V_A([\frac{i-1}{2^n},\frac{i}{2^n}] \times \I)=\frac{1}{2^n}$, so $s \ge 2^n \sum_{i=1}^{2^n} V_{Q}(R_{ij}^n)^+$.
Likewise, $s \ge 2^n \sum_{j=1}^{2^n} V_{Q}(R_{ij}^n)^+.$ Hence, $s \ge \alpha_Q$.
This implies $\|Q\|_M \ge 2\alpha_{Q}-1$.

By Theorem~\ref{thm:absolute-convergence} coefficient $\alpha_Q$ is finite. Recall from the proof of implication $(iii) \Longrightarrow (ii)$ of Theorem~\ref{thm:absolute-convergence} that $Q$ was written as $\alpha C-(\alpha-1)D$ for some copulas $C$ and $D$, where $\alpha$ was the limit of some subsequence of $\alpha_n$ defined in \eqref{eq:alpha_n}. Therefore,
$$\|Q\|_M \le 2\alpha-1.$$
Since $\alpha_Q$ is the supremum of $\alpha_n$ over all $n$, we obtain $\alpha \le \alpha_Q$, so $\|Q\|_M \le 2\alpha_Q-1$. Hence, $\|Q\|_M=2\alpha_Q-1$.
\end{proof}

While every copula induces a positive measure on Borell $\sigma$-algebra in $\I^2$, there exist quasi-copulas that do not induce a signed measure on the same $\sigma$-algebra as shown in \cite{FerRodUbe11}.
The characterization of all quasi-copulas that do induce a signed measure is still an open question, see \cite{AriMesDeB20}.
Now, every quasi-copula in $\mathcal{S}$ is a linear combination of two copulas, so it clearly induces a signed measure.
Unfortunately, not every quasi-copula that does induce a signed measure is of this form.
In our next example we construct a quasi-copula which induces a signed measure but cannot be written as a linear combination of two copulas.

\begin{example}\label{exa:final}
For every positive integer $i$ define a discrete quasi-copulas $Q_i$ on an equidistant mesh with $2i+1$ subintervals of $\I$ in such  a way that $Q_i$ has both a positive mass of total size $\frac{(i+1)^2}{2i+1}$ and a negative mass of total size $-\frac{i^2}{2i+1}$ evenly distributed  in a chessboard pattern inside the central diamond-shaped area, and has zero mass outside of this diamond-shape, 
so that $Q_i$ has precisely $(i+1)^2$ squares with mass $\frac{1}{2i+1}$ and exactly $i^2$ squares with mass $-\frac{1}{2i+1}$. 
In particular, the spread of mass for $Q_1$,  $Q_2$, and $Q_3$ is
$$ Q_1\colon \left(\begin{smallmatrix}
0 & +1/3 & 0\\
+1/3 & -1/3 & +1/3\\
0 & +1/3 & 0
\end{smallmatrix}\right), \quad Q_2\colon \left(\begin{smallmatrix}
0 & 0& +1/5 & 0 &0\\
0 & +1/5 & -1/5 & +1/5 &0\\
 +1/5 & -1/5 & +1/5 &-1/5 & +1/5\\
0 & +1/5 & -1/5 & +1/5 &0\\
0 & 0& +1/5 & 0 &0
\end{smallmatrix}\right),
\quad\hbox{and}\quad
$$
$$Q_3\colon \left(\begin{smallmatrix}
0&0 & 0& \alpha_3 & 0 &0&0\\
0&0 & \alpha_3 & -\alpha_3 & \alpha_3 &0&0\\
0& \alpha_3 & -\alpha_3 & \alpha_3 &-\alpha_3 & \alpha_3&0\\
\alpha_3& -\alpha_3 & \alpha_3 & -\alpha_3 &\alpha_3 & -\alpha_3&\alpha_3\\
0 & \alpha_3 & -\alpha_3 & \alpha_3 &-\alpha_3 &\alpha_3 &0\\
0 & 0 & \alpha_3 & -\alpha_3 &\alpha_3  &0 &0\\
0&0 & 0& \alpha_3 & 0 &0&0
\end{smallmatrix}\right);\qquad \alpha_3=1/7.$$
Let a quasi-copula  $\widehat{Q}_i$ be a piecewise bilinear extension of $Q_i$.
Let $a_0=0$ and $a_i=\frac12+\frac14+\ldots+\frac{1}{2^i}$ for all $i\ge 1$, and let  $J_i=[a_{i-1},a_i]\subseteq\I$ for all $i \geq 1$. Note that the union of these intervals is $[0,1)$.
Finally, define  quasi-copula $Q$ as the countable ordinal sum of $\widehat{Q}_i$ with respect to the family of intervals
$\{J_i\}$ (we refer to \cite{FerNelUbe11} and \cite{AriMesDeB20} for the definition and general discussion of ordinal sums of quasi-copulas).
In particular,
$$Q(x,y)=\begin{cases}
a_{i-1} + (a_i-a_{i-1}) \widehat{Q}_i(\tfrac{x-a_{i-1}}{a_i-a_{i-1}},\tfrac{y-a_{i-1}}{a_i-a_{i-1}}); &  (x,y)\in J_i^2,\\
\min\{x,y\} ; & \text{otherwise.}
\end{cases}$$
The total positive mass of $Q$ is $\sum_{i=1}^\infty \frac{1}{2^i} \frac{(i+1)^2}{(2i+1)}$ which is a convergent sum (equal to $\frac{3}{2}+\frac{\sqrt{2}}{4} \mathop{\mathrm{arsinh}} 1$). Similarly, the total negative mass of $Q$ equals $-\sum_{i=1}^\infty \frac{1}{2^i} \frac{i^2}{(2i+1)}$ 
which is also convergent (equal to $ -\frac{1}{2}-\frac{\sqrt{2}}{4} \mathop{\mathrm{arsinh}} 1$). Thus, $Q$ induces a signed measure.

Observe also that the total positive  mass of $Q_i$ in its middle column equals $\frac{i+1}{2i+1}$, so the total positive mass of $Q$ in the corresponding vertical strip, which we denote by  $[x_i,x_{i+1}]\times\I$, equals $\frac{i+1}{2^i(2i+1)}$. Since the width of this vertical strip is $\frac{1}{2^i(2i+1)}$, we see that the total positive mass of $Q$ in the strip  $[x_i,x_{i+1}]\times\I$ divided by strip width equals $i+1$. Hence, the
supremum in $(iii')$ of Remark~\ref{rem:main} is infinite.
By Theorem~\ref{thm:absolute-convergence} and Remark~\ref{rem:main}, $Q$ cannot be written as a linear combination of two copulas.
\end{example}

\section{Conclusions}
In this paper we investigated when a bivariate quasi-copula can be expressed as a (finite or infinite) linear combination of bivariate copulas.
We gave a new direct proof that every discrete quasi-copula is a linear combination of two discrete copulas and showed that every general bivariate quasi-copula can be written as a uniformly convergent infinite sum of multiples of copulas.
We also characterized when a general quasi-copula can be expressed as a linear combination of two copulas by giving an equivalent condition involving the mass distribution of the quasi-copula.

One of our main methods was  mass domination. We illustrated how the mass distribution of a discrete $2$-increasing function can be dominated from above by a mass distribution of a multiple of a discrete copula.
We determined the optimal constant for such domination and the lower and upper bound for the set of corresponding discrete copulas.
This allowed us to express the Minkowski norm of a discrete quasi-copula in terms of its mass distribution.

Since infinite series (of multiples) of copulas seem to be largely unexplored, we hope this paper will encourage future investigations in this direction.
Although such series take us outside the framework of quasi-copulas in general, it has been the case in several of our recent results on copulas that the proofs required to go  beyond quasi-copulas into classes of more general functions.
For example, in \cite{OmlSto20} general, even noncontinuous, real-valued functions were used to construct copulas between two quasi-copulas.
The proof of exactness of bounds for copulas with fixed value at one point in \cite{KleKokOmlSamSto22} required the introduction of a new class of functions, called $F$-copulas, which generalize copulas. And in \cite{Sto23} envelopes of distribution functions were represented with semi-copulas constructed from quasi-copulas.
In order to better understand (quasi-) copulas it therefore seems to be beneficial to study larger classes of function.

\section*{Acknowledgments}

The authors acknowledge financial support from the ARIS (Slovenian Research and Innovation Agency, research core funding No. P1-0288, P1-0285, and P1-0222, and research project N1-0210).

\bibliographystyle{amsplain}
\bibliography{LCC_arXiv}

\end{document}